\documentclass{article}
\usepackage[final]{graphicx}
\usepackage{psfrag}
\usepackage{amsmath,amsfonts,amssymb,amsxtra,subeqnarray}

\newcommand {\Real}{\ensuremath{{\mathbb{R}}}}
\newcommand {\Natural}{\ensuremath{{\mathbb{N}}}}

\newcommand {\Rational}{\ensuremath{{\mathbb{Q}}}}

\newcommand{\X}{\ensuremath{\mathcal X}}
\newcommand{\Y}{\ensuremath{\mathcal Y}}

\newcommand{\N}{\ensuremath{\mathcal N}}

\newcommand{\vi}{\ensuremath{{\mathbf{v}}}}
\newcommand{\ex}{\ensuremath{{\mathbf{x}}}}
\newcommand{\zi}{\ensuremath{{\mathbf{z}}}}

\newcommand{\vay}{\ensuremath{{\mathbf{y}}}}
\newcommand{\one}{\ensuremath{{\mathbf{1}}}}

\newcommand{\xhat}{\hat x}

\newtheorem{theorem}{Theorem}

\newtheorem{lemma}{Lemma}
\newtheorem{claim}{Claim}

\newtheorem{definition}{Definition}
\newtheorem{remark}{Remark}
\newtheorem{assumption}{Assumption}

\newenvironment{proof}{\noindent {\bf Proof.}}{\hfill \hspace*{1pt}\hfill$\blacksquare$}

\begin{document}

\title{Geometry of deadbeat synchronization}
\author{S. Emre Tuna\footnote{The author is with Department of
Electrical and Electronics Engineering, Middle East Technical
University, 06800 Ankara, Turkey. Email: {\tt
tuna@eee.metu.edu.tr}}} \maketitle

\begin{abstract}
The deadbeat synchronization of identical discrete-time nonlinear
systems is studied from a geometric point of view. An array of
deadbeat observers coupled via a deadbeat interconnection is shown
to achieve synchronization in finite number of steps provided that
a compatibility condition is satisfied between the observer and
the interconnection. As an illustration to the theory, an example
is provided where an array of third order observers achieves
deadbeat synchronization.
\end{abstract}

\section{Introduction}

Two or more dynamical systems are said to synchronize when their
solutions converge to a common trajectory. The generality of this
definition allows many seemingly different cases to make examples
of synchronization \cite{wang02}. One such example is the
following pair of discrete-time linear systems
\begin{subeqnarray}\label{eqn:luenberger}
x_{1}^{+}&=&Ax_{1}\\
x_{2}^{+}&=&Ax_{2}+L(y_{1}-y_{2})
\end{subeqnarray}
where $y_{i}=Cx_{i}$ and all the eigenvalues of the matrix $A-LC$
are on the open unit disc. The second system
(\ref{eqn:luenberger}b) is the classic linear observer
\cite{rugh96} and its construction readily yields
$\|x_{1}(k)-x_{2}(k)\|\to 0$ as $k\to\infty$ regardless of the
initial conditions. Another linear example to synchronization is
the following array of systems with rather simple dynamics
\begin{eqnarray}\label{eqn:consensus}
y_{i}^{+}=\sum_{j=1}^{q}\gamma_{ij}y_{j}\,,\qquad
i=1,\,2,\,\ldots,\,q
\end{eqnarray}
where $[\gamma_{ij}]\in\Real^{q\times q}$ is a {\em (connected)
coupling matrix}. That is, $[\gamma_{ij}]$ satisfies: (i) the
entries of each row sum up to unity, which implies that
$\lambda=1$ is an eigenvalue with the eigenvector $[1\ \ 1\
\ldots\ 1]^{T}$, and (ii) all the remaining eigenvalues are on the
open unit disc. In this case the solutions $y_{i}(k)$ converge to
a fixed point in space and the systems are said to reach consensus
\cite{olfati07,moreau05}.

At first sight the
arrays~\eqref{eqn:luenberger}~and~\eqref{eqn:consensus} may not
seem to be relevant, but they are in fact the two limiting cases
of the following general structure
\begin{eqnarray}\label{eqn:tempting}
x_{i}^{+}&=&Ax_{i}+L\left(\sum_{j=1}^{q}\gamma_{ij}y_{j}-y_{i}\right)\,,\qquad
i=1,\,2,\,\ldots,\,q\\
y_{i}&=&Cx_{i}\,.\nonumber
\end{eqnarray}
Note that \eqref{eqn:tempting} boils down to
\eqref{eqn:luenberger} for
\begin{eqnarray*}
[\gamma_{ij}]=\left[\begin{array}{rr}1&0\\1&0\end{array}\right]
\end{eqnarray*}
and to \eqref{eqn:consensus} for $A=C=L=I$. An effective way to
study the synchronization behavior of an array is through
understanding the smaller pieces that it is made of
\cite{wu95,pecora98,tuna08}. So if the array~\eqref{eqn:tempting}
is what we are trying to understand then it is worthwhile to focus
on its two limiting cases: the array~\eqref{eqn:luenberger} and
the array~\eqref{eqn:consensus}. In this line of thinking, the
very first question that one is tempted to ask is the following.
{\em Given that the system~(\ref{eqn:luenberger}b) is an observer
for the system~(\ref{eqn:luenberger}a) and that the
array~\eqref{eqn:consensus} reaches consensus, does the
array~\eqref{eqn:tempting} synchronize?} However, a counterexample
is easy to construct and this naive guess has to be abandoned.

Having dispensed with the first question, we point our attention,
among a number of possibilities, to the following. {\em Given that
the system~(\ref{eqn:luenberger}b) is a deadbeat observer
\cite{glad83,valcher99,tuna12} for the
system~(\ref{eqn:luenberger}a) and that the
array~\eqref{eqn:consensus} reaches consensus in finite number of
steps \cite{kingston06,sundaram07}, does the
array~\eqref{eqn:tempting} synchronize?} This guess turns out to
be more fruitful than the first one. In fact not only
synchronization is achieved in this case, but it is achieved in
deadbeat fashion, that is, in finite number of steps. Motivated by
this simple observation on linear systems, we aim in this paper to
establish sufficient conditions that guarantee {\em deadbeat
synchronization} in an array of coupled identical discrete-time
nonlinear systems. What we particularly study here is the
synchronization behavior of an array of deadbeat observers that
are coupled through a fixed interconnection scheme, which itself,
if considered separately as the righthand side of an array, enjoys
deadbeat synchronization. We show that deadbeat synchronization is
achieved under a compatibility condition between the observer and
the interconnection.

The literature accommodates few results on deadbeat
synchronization. Motivated by possible applications in secure
communications, one of the earliest results on the subject is
presented in \cite{deangeli95}, where conditions for the
synchronization of {\em two systems} of type Lur'e, coupled via a
{\em scalar output} signal are given. Later, certain improvements
to this work are reported, for instance, in
\cite{lian02,grassi02,grassi12}, where synchronization still
requires that the number of systems is two and the output is
scalar. To the best of our knowledge, the problem of deadbeat
synchronization has not yet been considered in a general setting
where the number of (identical, nonlinear) individual systems are
arbitrary and the output signals, through which the systems are
coupled, are not necessarily scalar. The contribution of this
paper is hence intended to be in understanding better the
mechanism behind synchronization in discrete time from the
deadbeat point of view.

The remainder of the paper is organized as follows. The next
section contains some preliminary material. In
Section~\ref{sec:dbobs} is the construction of the deadbeat
observer, which is of geometric nature \cite{wonham70,wonham85}
and makes a special case of what is presented in \cite{tuna12}.
The reader will find an illustrative example following this
construction. As mentioned earlier, the observer together with the
system being observed make a particular case of synchronization
where there are only two systems. To reach a natural
generalization of this scenario we take two mental steps. First,
we remove the distinction between the two systems by allowing each
to observe the other. In other words, we dispense with the {\em
drive system (leader)-response system (follower)} hierarchy.
Second, having removed the distinction between the observer and
the observee, we allow the number of systems involved to be
arbitrary. At that point a method is required to couple this array
of observers. Therefore we introduce in Section~\ref{sec:dbint}
what we call {\em deadbeat interconnection}, which basically is a
nonlinear generalization of the time-invariant map (coupling
matrix) that appears in linear deadbeat consensus. Then in
Section~\ref{sec:dbsync} we bring together the observer
construction of Section~\ref{sec:dbobs} and the interconnection
scheme of Section~\ref{sec:dbint} to define the array of coupled
observers. There we establish the deadbeat synchronization of this
array under a compatibility condition that concerns both the
observer and the interconnection. In Section~\ref{sec:dbex} we
provide a nonlinear example where an array of third order deadbeat
observers are shown to achieve deadbeat synchronization.  Certain
issues are discussed in Section~\ref{sec:notes}.

\section{Preliminaries}

The set of nonnegative integers is denoted by $\Natural$, the set
of rational numbers by $\Rational$. A vector of all ones is
denoted by $\one$. The $m\times m$ identity matrix is denoted by
$I_{m}$, or sometimes simply by $I$ when what $m$ should be is
either obvious or immaterial. The symbol $\otimes$ denotes
Kronecker product. The null space of a matrix $M\in\Real^{m\times
n}$ is denoted by $\N(M)$ and $M^{\perp}$ denotes some real
matrix, whose columns form a basis for $\N(M)$. For square $M$ we
let $M^{0}=I$. Given a map $h:\X\to\Y$, $h^{-1}$ denotes the {\em
inverse} map in the general sense that, for $y\in\Y$, $h^{-1}(y)$
is the set of all $x\in\X$ satisfying $h(x)=y$. That is, we will
not need $h$ be bijective when talking about its inverse. For
$f:\X\to\X$ we let $f^{0}(x)=x$, $f^{k+1}(x)=f(f^{k}(x))$, and
$f^{-k}=(f^{-1})^{k}$ for $k\in\Natural$. Given vectors
$x_{1},\,x_{2},\,\ldots,\,x_{q}\in\Real^{n}$ we write
$(x_{1},\,x_{2},\,\ldots,\,x_{q})$ to mean $[x_{1}^{T}\ x_{2}^{T}\
\ldots\ x_{q}^{T}]^{T}\in\Real^{qn}$. We sometimes use ``$*$'' as
a placeholder for ``don't care.''

\section{Deadbeat observer}\label{sec:dbobs}

This section is dedicated to the description of the nonlinear
deadbeat observer. Later, in Section~\ref{sec:dbsync}, when we
establish the conditions for deadbeat synchronization of an array
of coupled observers, the construction presented here will be of
key importance. Unlike linear systems, there is not a standard
deadbeat observer construction for nonlinear systems. Even the
definition of a deadbeat observer may not be unique. The
definition and the construction that we present in this section
are adopted from \cite{tuna12}. The section ends with an
illustration of the construction.

\subsection{Definition}

Consider the following discrete-time system
\begin{eqnarray}\label{eqn:system}
x^{+}=f(x)\,,\quad y=h(x)
\end{eqnarray}
where $x\in\X\subset\Real^{n}$ is the {\em state}, $x^{+}$ is the
state at the next time instant, and $y\in
h(\X)=:\Y\subset\Real^{m}$ is the {\em output} or the {\em
measurement}. The {\em solution} of the system~\eqref{eqn:system}
at time $k\in\Natural$, starting at the initial condition
$x(0)\in\X$ is denoted by $x(k)$. Now consider the following array
\begin{subeqnarray}
x^{+}&=&f(x)\\
\xhat^{+}&=&g(\xhat,\,h(x))\,.\label{eqn:cascade}
\end{subeqnarray}
The solution of the system~(\ref{eqn:cascade}b) at time
$k\in\Natural$, starting at the initial condition $\xhat(0)\in\X$
is denoted by $\xhat(k)$. Note that $\xhat(k)$ depends also on
$x(0)$. We now use \eqref{eqn:cascade} to define deadbeat
observer.

\begin{definition}
Given $g:\X\times\Y\to\X$, the system
\begin{eqnarray*}
\xhat^{+}= g(\xhat,\,y)
\end{eqnarray*}
is said to be a {\em deadbeat observer for the
system~\eqref{eqn:system}} if there exists an integer $p\geq 1$
such that, for all initial conditions, the solutions of the
array~\eqref{eqn:cascade} satisfy $\xhat(k)=x(k)$ for all $k\geq
p$. The integer $p$ then is called a {\em deadbeat horizon}.
\end{definition}

\subsection{Construction}

To be used in the construction of the observer we define certain
sets associated with the system~\eqref{eqn:system}. For $x\in\X$
we let
\begin{eqnarray*}
[x]_{0}:=h^{-1}(h(x))\,.
\end{eqnarray*}
Note that when $h(x)=Cx$, where $C\in\Real^{m\times n}$, we have
$[x]_{0}=x+\N(C)$. We then let for $k\in\Natural$
\begin{eqnarray*}
[x]_{k+1}:=[x]_{k}^{+}\cap[x]_{0}
\end{eqnarray*}
where
\begin{eqnarray*}
[x]_{k}^{+}:=f([f^{-1}(x)]_{k})\,.
\end{eqnarray*}
We finally let
\begin{eqnarray*}
[x]_{-1}^{+}:=\X\,.
\end{eqnarray*}
We make the following two assumptions to guarantee that the
observer construction will work. We note that these conditions are
only sufficient. For less restrictive assumptions see
\cite{tuna12}.

\begin{assumption}\label{assume:bijective}
The map $f:\X\to\X$ is bijective.
\end{assumption}

\begin{assumption}\label{assume:singleton}
There exists $p\geq 1$ such that $[\xhat]_{p-2}^{+}\cap h^{-1}(y)$
is singleton for all $\xhat\in\X$ and $y\in\Y$.
\end{assumption}

The following result tells us how to design a deadbeat observer
under these assumptions.

\begin{theorem}\label{thm:dbobs}
Suppose Assumptions~\ref{assume:bijective}-\ref{assume:singleton}
hold. Then the system
\begin{eqnarray}\label{eqn:deadbeat}
\hat{x}^{+}= f([\xhat]^{+}_{p-2}\cap h^{-1}(y))
\end{eqnarray}
is a deadbeat observer for the system~\eqref{eqn:system} with
deadbeat horizon $p$.
\end{theorem}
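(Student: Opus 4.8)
The plan is to first reinterpret the auxiliary sets $[x]_k$ and $[x]^+_k$ in terms of the past output sequence. Using Assumption~\ref{assume:bijective} (so that $f^{-1}$ is a genuine single-valued map on $\X$), I would prove by induction on $k$ that
\[
[x]_k=\{x'\in\X:\ h(f^{-i}(x'))=h(f^{-i}(x))\ \text{for}\ i=0,1,\ldots,k\},
\]
and consequently $[x]^+_k=f([f^{-1}(x)]_k)=\{x'\in\X:\ h(f^{-i}(x'))=h(f^{-i}(x))\ \text{for}\ i=1,\ldots,k+1\}$; the degenerate case $[x]^+_{-1}=\X$ corresponds to an empty list of constraints and covers small $p$. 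In words, $[x]^+_k$ is exactly the set of states whose outputs, $i$ steps into the past for $i=1,\ldots,k+1$, agree with those of $x$. This makes Assumption~\ref{assume:singleton} readable as: knowing $y=h(x')$ together with the $p-1$ older outputs $h(f^{-1}(x')),\ldots,h(f^{-(p-1)}(x'))$ pins $x'$ down uniquely. In particular the map $g$ in~\eqref{eqn:deadbeat} is well defined, since $[\xhat]^+_{p-2}\cap h^{-1}(y)$ is always a singleton.

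Next I would run the observer against the plant. Write $x(\cdot)$ and $\xhat(\cdot)$ for the solutions of the array~\eqref{eqn:cascade} with $g$ as in~\eqref{eqn:deadbeat}, set $y(k):=h(x(k))$, and let $z(k)$ be the unique point of $[\xhat(k)]^+_{p-2}\cap h^{-1}(y(k))$, so that $\xhat(k+1)=f(z(k))$. Applying $h\circ f^{-i}$ to this identity and using the description of $[\,\cdot\,]^+_{p-2}$ from the first step, I would show by induction on $k$ that
\[
h(f^{-i}(\xhat(k)))=y(k-i)\qquad\text{for}\ i=1,\ldots,\min(k,p).
\]
The base case is vacuous; in the inductive step the $i=1$ coordinate is fed the fresh measurement $y(k)$, while the coordinates $i=2,\ldots$ are shifts of the previously validated coordinates of $\xhat(k)$, the only care needed being the bookkeeping of index ranges. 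The upshot is that once $k\geq p-1$ the older outputs embedded in $\xhat(k)$ coincide with the true ones: $h(f^{-i}(\xhat(k)))=y(k-i)$ for $i=1,\ldots,p-1$.

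Finally I would close the loop. For $k\geq p-1$, the point $z(k)$ satisfies $h(z(k))=y(k)$ by construction and $h(f^{-i}(z(k)))=h(f^{-i}(\xhat(k)))=y(k-i)$ for $i=1,\ldots,p-1$ by the previous step, hence $h(f^{-i}(z(k)))=y(k-i)$ for all $i=0,1,\ldots,p-1$. But the true state has the same property, since $h(f^{-i}(x(k)))=h(f^{\,k-i}(x(0)))=y(k-i)$ whenever $k-i\geq 0$. By the description of $[\,\cdot\,]^+_{p-2}$, the set of $x'\in\X$ with $h(f^{-i}(x'))=y(k-i)$ for $i=0,\ldots,p-1$ is precisely $[x(k)]^+_{p-2}\cap h^{-1}(y(k))$, which Assumption~\ref{assume:singleton} declares to be a singleton; since both $z(k)$ and $x(k)$ lie in it, $z(k)=x(k)$. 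Therefore $\xhat(k+1)=f(z(k))=f(x(k))=x(k+1)$ for every $k\geq p-1$, that is, $\xhat(k)=x(k)$ for all $k\geq p$, which is the assertion.

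I expect the main obstacle to be the indexing discipline in the two inductions: getting the translation between the recursively defined sets and the past-output description exactly right, and then threading the $\min(k,p)$ bound through the propagation argument without an off-by-one error. The conceptual point that does the real work, and which is easy to overlook, is that Assumption~\ref{assume:singleton} may be invoked with the \emph{plant} state $x(k)$ (not merely an estimator state) playing the role of $\xhat$, which is what forces $z(k)=x(k)$ rather than only $h(z(k))=h(x(k))$.
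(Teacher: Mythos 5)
Your proof is correct. Note first that the paper does not prove Theorem~\ref{thm:dbobs} directly: it defers it, obtaining it as a corollary of the general synchronization result (Theorem~\ref{thm:main}) by taking $q=2$ and the degenerate interconnection $\gamma_{1}(\vay)=\gamma_{2}(\vay)=y_{1}$, for which $r=1$, compatibility holds vacuously (since $\Y_{\sigma}=\Y^{q}$ for $\sigma\geq 1$), and the first system's dynamics collapse to $x^{+}=f(x)$ because $[x]_{p-2}^{+}\cap h^{-1}(h(x))=\{x\}$. Your route is a direct, self-contained argument, and its key device --- the explicit characterization $[x]_{k}=\{x':h(f^{-i}(x'))=h(f^{-i}(x)),\ i=0,\ldots,k\}$ --- is precisely the concrete content of the paper's machinery: property 1 of Lemma~\ref{lem:list} is the trivial half of it, Lemma~\ref{lem:two} (that $hf^{-\alpha}\ex_{k}$ equals the driving signal from $\alpha$ steps back) is your propagation induction, and the telescoping identity $x=h^{-1}hx\cap fh^{-1}hf^{-1}x\cap\cdots\cap f^{p-1}h^{-1}hf^{1-p}x$ at the end of the proof of Theorem~\ref{thm:main} is exactly your observation that Assumption~\ref{assume:singleton} may be invoked with the plant state in the role of $\xhat$, forcing $z(k)=x(k)$ rather than merely output agreement. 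What your direct approach buys is transparency and economy for the two-system case (no compatibility bookkeeping, no set-valued lemmas); what the paper's approach buys is that the same lemmas then carry the full $q$-system, nontrivially coupled case. Your indexing, including the $\min(k,p)$ bound and the use of $[x]_{-1}^{+}=\X$ to handle small $p$, checks out.
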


Theorem~\ref{thm:dbobs} will later become a corollary of our main
result (Theorem~\ref{thm:main}). Hence we omit the proof. Let us
now give an example to this construction~\eqref{eqn:deadbeat}.

\subsection{Illustration}

Consider the system~\eqref{eqn:system} with
\begin{eqnarray*}
f(x)=\left[\begin{array}{c}1+\xi_{2}-a\xi_{1}^{2}\\b\xi_{1}+\xi_{3}\\-b\xi_{1}\end{array}
\right]
\end{eqnarray*}
where $x=(\xi_{1},\,\xi_{2},\,\xi_{3})\in\Real^{3}$. This map
appears in \cite{chua87} where it is reported to exhibit chaotic
behavior for certain values of real numbers $a$ and $b$. Let us
now construct a deadbeat observer taking $h(x)=\xi_{3}$. Note
first that $f$ is bijective (for $b$ nonzero) and its inverse is
\begin{eqnarray}\label{eqn:chuainv}
f^{-1}(x)=\left[\begin{array}{c}
-b^{-1}\xi_{3}\\
-1+ab^{-2}\xi_{3}^{2}+\xi_{1}\\
\xi_{2}+\xi_{3}
\end{array} \right]
\end{eqnarray}
Since $h(x)=\xi_{3}$ we can write
\begin{eqnarray}\label{eqn:chuax0}
[x]_{0}=\left\{\left[\begin{array}{c}
s\\
t\\
\xi_{3}
\end{array} \right]:s,\,t\in\Real\right\}
\end{eqnarray}
Then by~\eqref{eqn:chuainv}
\begin{eqnarray}\label{eqn:chuax0plus}
[x]_{0}^{+}
&=& f([f^{-1}(x)]_{0})\nonumber\\
&=& f\left(\left\{\left[\begin{array}{c}
s\\
t\\
\xi_{2}+\xi_{3}
\end{array} \right]:s,\,t\in\Real\right\}\right)\nonumber\\
&=& \left\{\left[\begin{array}{c}
t\\
s+\xi_{2}+\xi_{3}\\
-s
\end{array} \right]:s,\,t\in\Real\right\}
\end{eqnarray}
The set $[x]_{1}$ is defined to equal $[x]_{0}^{+}\cap[x]_{0}$.
The intersection of the sets~\eqref{eqn:chuax0} and
\eqref{eqn:chuax0plus} yields
\begin{eqnarray*}
[x]_{1}=\left\{\left[\begin{array}{c}
u\\
\xi_{2}\\
\xi_{3}
\end{array} \right]:u\in\Real\right\}
\end{eqnarray*}
Now we can construct $[x]_{1}^{+}$ as
\begin{eqnarray*}
[x]_{1}^{+}
&=& f([f^{-1}(x)]_{1})\nonumber\\
&=& f\left(\left\{\left[\begin{array}{c}
u\\
-1+ab^{-2}\xi_{3}^{2}+\xi_{1}\\
\xi_{2}+\xi_{3}
\end{array} \right]:u\in\Real\right\}\right)\nonumber\\
&=& \left\{\left[\begin{array}{c}
\xi_{1}+ab^{-2}\xi_{3}^{2}-au^{2}\\
bu+\xi_{2}+\xi_{3}\\
-bu
\end{array} \right]:u\in\Real\right\}
\end{eqnarray*}
Observe that
\begin{eqnarray*}
[\xhat]_{1}^{+}\cap h^{-1}(y) &=&\left\{\left[\begin{array}{c}
\hat\xi_{1}+ab^{-2}\hat\xi_{3}^{2}-au^{2}\\
bu+\hat\xi_{2}+\hat\xi_{3}\\
-bu
\end{array} \right]:u\in\Real\right\}
\cap\left\{\left[\begin{array}{c}
s\\
t\\
y
\end{array} \right]:s,\,t\in\Real\right\}\\
&=&\left[\begin{array}{c}
\hat\xi_{1}+ab^{-2}(\hat\xi_{3}^{2}-y^{2})\\
\hat\xi_{2}+\hat\xi_{3}-y\\
y
\end{array}\right]
\end{eqnarray*}
which means that Assumption~\ref{assume:singleton} is satisfied
with $p=3$. The dynamics of the deadbeat observer then read
\begin{eqnarray*}
\xhat^{+}&=&f([\xhat]_{1}^{+}\cap h^{-1}(y))\\
&=&\left[\!\!\begin{array}{c}
1+\hat\xi_{2}+\hat\xi_{3}-y-a(\hat\xi_{1}+ab^{-2}(\hat\xi_{3}^{2}-y^{2}))^{2}\\
b\hat\xi_{1}+ab^{-1}(\hat\xi_{3}^{2}-y^{2})+y\\
-b\hat\xi_{1}-ab^{-1}(\hat\xi_{3}^{2}-y^{2})
\end{array}\!\!\right]
\end{eqnarray*}
For the parameter choice $a=1$ and $b=1/3$, Fig.~\ref{fig:dbobs}
shows the simulation results for the initial conditions
$x(0)=(1,\,-1,\,1)$ and $\xhat(0)=(0,\,2,\,1)$.
\begin{figure}[h]
\begin{center}
\includegraphics[scale=0.6]{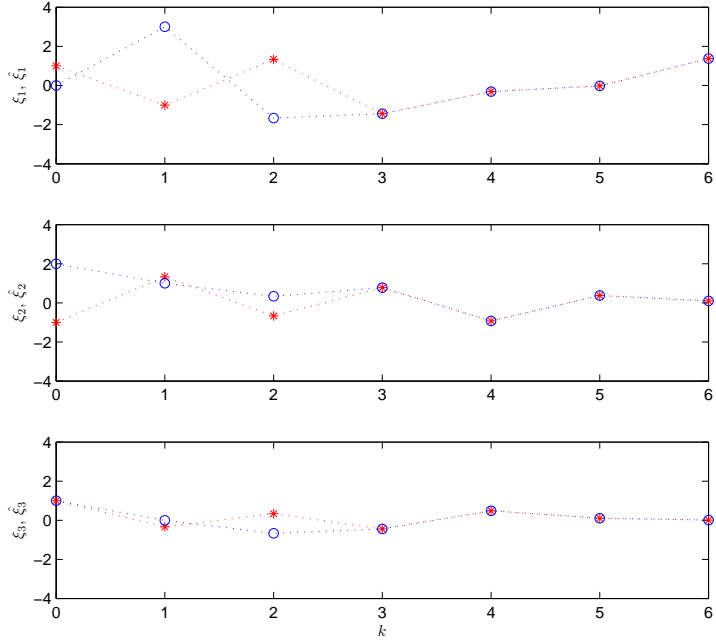}\caption{Evolution of states $x$ (marked $*$) and $\xhat$ (marked
$\circ$).}\label{fig:dbobs}
\end{center}
\end{figure}

The description of the deadbeat observer is now complete. As
mentioned earlier, we are headed towards understanding the
collective behavior of an arbitrary number of identical observers
that are interacting. To be able to proceed, we therefore first
need to be precise with what we mean by interacting. To this end,
we introduce in the next section the so called deadbeat
interconnection. This interconnection scheme will be evoked in
Section~\ref{sec:dbsync} to characterize the coupling of the array
whose synchronization we will study.

\section{Deadbeat interconnection}\label{sec:dbint}

Here we provide a generalization of the case where the linear
array~\eqref{eqn:consensus} reaches consensus in finite number of
steps, which happens when the characteristic polynomial of the
$q\times q$ coupling matrix $[\gamma_{ij}]$ is
$d(s)=s^{q-1}(s-1)$. When referring to such $[\gamma_{ij}]$ we
will use the term {\em deadbeat coupling matrix}. A primitive
example for $q=4$ is given as
\begin{eqnarray}\label{eqn:dbcoup}
[\gamma_{ij}]=\left[\begin{array}{rrrr}
0.4&-0.2&3.2&-2.4\\
0.4&-0.2&0.2&0.6\\
0.2&-0.6&0.6&0.8\\
0.3&-0.4&0.9&0.2
\end{array}\right]
\end{eqnarray}
which yields
\begin{eqnarray*}
[\gamma_{ij}]^{k}=\left[\begin{array}{rrrr}
0.1&-0.8&1.3&0.4\\
0.1&-0.8&1.3&0.4\\
0.1&-0.8&1.3&0.4\\
0.1&-0.8&1.3&0.4
\end{array}\right]=\left[\begin{array}{r}
1\\1\\1\\1
\end{array}\right]\times[0.1 \ \ -0.8\ \ 1.3\ \ 0.4]
\end{eqnarray*}
for all $k\geq 3$. This means that the solutions of the
array~\eqref{eqn:consensus} should satisfy
$y_{i}(k)=0.1y_{1}(0)-0.8y_{2}(0)+1.3y_{3}(0)+0.4y_{4}(0)$ for all
$k\geq 3$. That is, convergence is exact. Now we give the
generalization.

\begin{definition}\label{def:dbint}
A map $\gamma=(\gamma_{1},\,\gamma_{2},\,\ldots,\,\gamma_{q})$
with $\gamma_{i}:\Y^{q}\to\Y$ is said to be a {\em deadbeat
interconnection} if the following conditions simultaneously hold.
\begin{itemize}
\item There exists an integer $r\geq 1$ such that, for all initial
conditions, the solutions of the array
\begin{eqnarray*}
y_{i}^{+}&=&\gamma_{i}(y_{1},\,y_{2},\,\ldots,\,y_{q})\,,\qquad
i=1,\,2,\,\ldots,\,q
\end{eqnarray*}
satisfy $y_{i}(k)=y_{j}(k)$ for all $k\geq r$ and all $i,\,j$. The
integer $r$ then is called a {\em deadbeat horizon}. \item For all
$y\in\Y$ we have
$\gamma_{i}(y,\,y,\,\ldots,\,y)=\gamma_{j}(y,\,y,\,\ldots,\,y)$
for all $i,\,j$.
\end{itemize}
\end{definition}

Some examples are in order. Let $G\in\Real^{q\times q}$ be a
deadbeat coupling matrix and $\Y=\Real^{m}$. Then the linear map
\begin{eqnarray}\label{eqn:struc1}
\vay\mapsto(G\otimes Q)\vay
\end{eqnarray}
with $\vay=(y_{1},\,y_{2},\,\ldots,\, y_{q})$ is a deadbeat
interconnection for any $Q\in\Real^{m\times m}$. Note for $Q=I$
that the array~\eqref{eqn:consensus} makes a special case of this
construction. Not all linear deadbeat interconnections must have
this structure~\eqref{eqn:struc1} though. For instance, for
$\Y=\Real^{2}$ and $q=2$, the map $\vay\mapsto\Gamma\vay$ with
\begin{eqnarray}\label{eqn:struc2}
\Gamma=\left[\begin{array}{rrrr}0&0&1&0\\0&1&0&0\\0&0&1&0\\
1&1&-1&0\end{array}\right]
\end{eqnarray}
can be shown to be a deadbeat interconnection for which no
$G,\,Q\in\Real^{2\times 2}$ exist that yield $G\otimes Q=\Gamma$.
Our last example is nonlinear. Let $[\gamma_{ij}]\in\Real^{q\times
q}$ be a deadbeat coupling matrix and $\Y=\Real$. Then the map
$\gamma=(\gamma_{1},\,\gamma_{2},\,\ldots,\,\gamma_{q})$ with
\begin{eqnarray}\label{eqn:hmgcoup}
\gamma_{i}(\vay)=\left(\sum_{j=1}^{q}\gamma_{ij}y_{j}^3\right)^{1/3}
\end{eqnarray}
is a deadbeat interconnection.

Recall that in the standard observer setting, where there are only
two systems in the array, i.e., the response system (the observer)
and the drive system (the system being observed); the driving
signal for the observer is the output of the system being
observed. When one considers the general setting, where the array
contains an arbitrary number of systems, the driving signal of
each system (observer) is a function of the outputs of all the
systems in the array. An array with $q$ many systems means there
will be $q$ such functions. The deadbeat interconnection
$(\gamma_{1},\,\gamma_{2},\,\ldots,\,\gamma_{q})$ we defined in
this section is nothing but a particular collection of those
coupling functions. Having defined deadbeat observer and deadbeat
interconnection, now we proceed to study the array that we form by
bringing the two together.

\section{Deadbeat synchronization}\label{sec:dbsync}

This section is where we finally gather the conditions that yield
deadbeat synchronization of an array of coupled identical
observers. Let us begin with defining the phenomenon under
investigation.

\begin{definition}
Given the maps $g:\X\times\Y\to\X$, $h:\X\to\Y$, and
$(\gamma_{1},\,\gamma_{2},\,\ldots,\,\gamma_{q})=\gamma:\Y^{q}\to\Y^{q}$,
the following array
\begin{eqnarray}\label{eqn:arraygeneral}
x_{i}^{+}&=&g(x_{i},\,w_{i})\,,\qquad i=1,\,2,\,\ldots,\,q\\
w_{i}&=&\gamma_{i}(h(x_{1}),\,h(x_{2}),\,\ldots,\,h(x_{q}))\nonumber
\end{eqnarray}
is said to achieve {\em deadbeat synchronization} if there exists
an integer $\tau\geq 1$ such that, for all initial conditions, the
solutions satisfy $x_{i}(k)=x_{j}(k)$ for all $k\geq\tau$ and all
$i,\,j$. The integer $\tau$ then is called a {\em deadbeat
horizon}.
\end{definition}

This definition lets us write the formal statement of the problem
to which we propose a solution in this paper: {\em Under what
conditions on the triple $(g,\,h,\,\gamma)$ does the
array~\eqref{eqn:arraygeneral} achieve deadbeat synchronization?}
Now, instead of directly listing the assumptions and establishing
the main result, we prefer first to present the linear case, which
we mentioned in the introduction in a slightly less general
form~\eqref{eqn:tempting}, that motivated all the analysis in this
paper. The conditions on which this linear result is founded we
take as a justification for some of the assumptions we will have
made.

\begin{theorem}\label{thm:linsync}
Let $A\in\Real^{n\times n}$, $C\in\Real^{m\times n}$, and
$L\in\Real^{n\times m}$ be such that $A-LC$ is nilpotent. Let
$[\gamma_{ij}]\in\Real^{q\times q}$ be a deadbeat coupling matrix.
Then the array
\begin{eqnarray}\label{eqn:moretempting}
x_{i}^{+}&=&Ax_{i}+L\left(\sum_{j=1}^{q}\gamma_{ij}Qy_{j}-y_{i}\right)\,,\qquad
i=1,\,2,\,\ldots,\,q\\
y_{i}&=&Cx_{i}\nonumber
\end{eqnarray}
achieves deadbeat synchronization for all $Q\in\Real^{m\times m}$.
In particular, for all initial conditions, we have
$x_{i}(k)=x_{j}(k)$ for all $k\geq rp$ and all $i,\,j$; where the
integers $r\geq 1$ and $p\geq 1$ satisfy
$[\gamma_{ij}]^{r}=[\gamma_{ij}]^{r+1}$ and $(A-LC)^{p}=0$.
\end{theorem}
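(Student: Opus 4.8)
The plan is to separate the dynamics into the ``synchronization error'' part and the ``average/consensus'' part, and to exploit that in the linear setting these two pieces decouple. Stack the states as $\ex=(x_1,x_2,\ldots,x_q)\in\Real^{qn}$ and write the array~\eqref{eqn:moretempting} compactly as $\ex^{+}=\big(I_q\otimes A\big)\ex+\big([\gamma_{ij}]\otimes(LQC)\big)\ex-\big(I_q\otimes(LC)\big)\ex$. The key algebraic observation is that the consensus subspace $\mathcal{S}=\{\one\otimes v : v\in\Real^n\}$ is invariant, and more importantly that, after a change of basis that puts $[\gamma_{ij}]$ into a form exhibiting the eigenvalue $1$ (eigenvector $\one$) and the nilpotent remainder, the closed-loop matrix becomes block upper (or lower) triangular. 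Concretely, let $T\in\Real^{q\times q}$ be invertible with $T\one=e_1$ (first standard basis vector) arranged so that $T[\gamma_{ij}]T^{-1}=\begin{bmatrix}1 & *\\ 0 & N\end{bmatrix}$ with $N\in\Real^{(q-1)\times(q-1)}$ satisfying $N^{r}=N^{r+1}$ — such $T$ exists because $[\gamma_{ij}]^{r}=[\gamma_{ij}]^{r+1}$ forces the part of the spectrum off $\{1\}$ to be nilpotent, consistent with the deadbeat coupling hypothesis.

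Applying $(T\otimes I_n)$ as a coordinate change, the closed loop becomes block triangular with diagonal blocks $A$ (the ``$1$'' block, where the $+L(\,\cdot\,-y_i)$ terms cancel: $A+LQC-LC$ acting through the eigenvalue structure, but restricted to the consensus direction the feedback contributes $L(Qy-y)$... — here one checks that on the first block the effective matrix is $A$ once we account that the interconnection row-sum is $1$) and, for each of the remaining $q-1$ blocks, a matrix of the form $A-LC$ plus a strictly-upper-triangular coupling through the $*$ entries and through $LQC$ scaled by entries of $N$. The decisive point is that $N$ being nilpotent of index $\le r$ means the ``error'' subsystem is a cascade of $r$ stages, each governed (on its diagonal) by $A-LC$, which is nilpotent of index $\le p$. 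Hence I would argue: after at most $r$ stages of propagation, each taking at most $p$ steps to annihilate, the entire error component is zero; this gives the bound $rp$. Formally one writes the error as $\bar\ex = (\Pi\otimes I_n)\ex$ for a projection $\Pi$ onto the complement of $\mathcal S$ (e.g. $\Pi = I_q - \one e_1^{T}T$ read off from $T$), shows $\bar\ex(k)$ obeys a linear recursion whose system matrix $M$ satisfies $M^{rp}=0$ by the cascade argument, and concludes $\bar\ex(k)=0$ for $k\ge rp$, i.e. $x_i(k)=x_j(k)$.

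I would carry out the steps in this order: (1) write the stacked system and identify $\mathcal S$ as invariant; (2) construct the basis change $T$ block-triangularizing $[\gamma_{ij}]$ with nilpotent off-consensus block $N$, $N^r=0$; (3) transform the closed loop and verify it is block triangular with the $q-1$ error blocks each having diagonal $A-LC$ and the coupling entering only ``upward'' via $N$ and the $*$ column; (4) interpret the error subsystem as an $r$-fold cascade of copies of $A-LC$ and prove $M^{rp}=0$ by a straightforward induction on the cascade depth (each application of $M^{p}$ clears one more Jordan-like layer of $N$); (5) deduce $x_i(k)=x_j(k)$ for $k\ge rp$.

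The main obstacle I anticipate is step~(3)–(4): making precise that the feedback term $L\big(\sum_j\gamma_{ij}Qy_j-y_i\big)$ really does land in block-triangular form under the $T$ coordinate change, since $Q\ne I$ in general and the ``$-y_i$'' term is $I_q\otimes LC$, not $[\gamma_{ij}]\otimes$ anything — one must check that $I_q$ and $[\gamma_{ij}]$ are simultaneously block-triangularized by $T$ (they are, since $I_q$ is triangular in every basis), and that the $Q$-dependence is confined to strictly-upper blocks so it cannot disturb nilpotency. Equivalently, I would phrase it via the Kronecker identity $(T\otimes I)(M_1\otimes M_2)(T^{-1}\otimes I)=(TM_1T^{-1})\otimes M_2$ applied termwise, which makes the triangular structure transparent and reduces everything to commuting nilpotent-after-triangularization blocks, whence the exponent $rp$ is immediate from submultiplicativity of nilpotency indices in a triangular cascade.
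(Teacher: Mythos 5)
Your proposal is correct and follows essentially the same route as the paper: stack the states, block-triangularize the coupling matrix via a similarity that isolates the eigenvalue $1$ (eigenvector $\one$) from a nilpotent remainder $J$ with $J^{r}=0$, use the Kronecker identity to see that the error blocks are block upper triangular with $A-LC$ on the diagonal, and conclude $M^{rp}=0$ by the triangular-cascade nilpotency argument. One small slip: on the consensus block the effective matrix is $A+L(Q-I)C$, not $A$ (the row sums being $1$ only gives $\sum_j\gamma_{ij}Qy_j=Qy$ on the diagonal of $\Y_0$, so the feedback $L(Qy-y)$ does not cancel unless $Q=I$), but this is immaterial to the synchronization claim, which depends only on the error subsystem vanishing.
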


\begin{proof}
First let $\ex=(x_{1},\,x_{2},\,\ldots,\,x_{q})$ and
$G:=[\gamma_{ij}]$. Then the array~\eqref{eqn:moretempting} can be
written as
\begin{eqnarray*}
\ex^{+}=\left[I_{q}\otimes A+(I_{q}\otimes L)(G\otimes
Q-I_{qm})(I_{q}\otimes C)\right]\ex\,.
\end{eqnarray*}
Since $G$ is a deadbeat coupling matrix we have
$G^{r}=\one\ell^{T}$ where $\ell\in\Real^{q}$ is the left
eigenvector of $G$ for the eigenvalue $\lambda=1$ satisfying
$\ell^{T}\one=1$. That all the remaining eigenvalues are at the
origin allows us to find a transformation matrix
$V\in\Real^{q\times q}$ satisfying
\begin{eqnarray*}
V=[\one\ *]\quad\mbox{and}\quad V^{-1}=[\ell\ *]^{T}
\end{eqnarray*}
as well as
\begin{eqnarray*}
V^{-1}GV=\left[\begin{array}{cc} 1&0\\0&J
\end{array}\right]
\end{eqnarray*}
for some $J\in\Real^{(q-1)\times(q-1)}$ that is strictly upper
triangular. Without loss of generality we assume that $J$ is in
Jordan form. That is, $J = {\rm
diag}(J_{1},\,J_{2},\,\ldots,\,J_{\sigma})$ where each (Jordan)
block $J_{\alpha}$, $\alpha\in\{1,\,2,\,\ldots,\,\sigma\}$, is
strictly upper triangular. Then the size of the largest Jordan
block is no greater than $r\times r$. Employing the coordinate
change $\zi:=(V^{-1}\otimes I_{n})\ex$ we can write
\begin{eqnarray}\label{eqn:zsystem}
\zi^{+}&=&(V^{-1}\otimes I_{n})\left[I_{q}\otimes A+(I_{q}\otimes
L)(G\otimes Q-I_{qm})(I_{q}\otimes C)\right](V\otimes I_{n})\zi\nonumber\\
&=&\left[I_{q}\otimes A+(I_{q}\otimes L)(V^{-1}\otimes
I_{m})(G\otimes Q-I_{qm})(V\otimes I_{m})(I_{q}\otimes
C)\right]\zi\nonumber\\
&=&\left[I_{q}\otimes A+(I_{q}\otimes L)(V^{-1}GV\otimes
Q-I_{qm})(I_{q}\otimes C)\right]\zi\nonumber\\
&=&\left[\begin{array}{cc}A+L(Q-I_{m})C&0\\0&N\end{array}\right]\zi
\end{eqnarray}
where
\begin{eqnarray*}
N=I_{q-1}\otimes A+(I_{q-1}\otimes L)(J\otimes
Q-I_{(q-1)m})(I_{q-1}\otimes C)\,.
\end{eqnarray*}
Since $J$ is block diagonal with blocks strictly upper triangular,
$N$ is also block diagonal $N = {\rm
diag}(N_{1},\,N_{2},\,\ldots,\,N_{\sigma})$ with the structure
\begin{eqnarray*}
N_{\alpha}=\left[\begin{array}{ccccc}A-LC&*&*&\cdots&*\\0&A-LC&*&\cdots&*\\0&0&A-LC&\cdots&*\\\vdots&\vdots&\vdots&\ddots&\vdots\\0&0&0&\cdots&A-LC\end{array}\right]
\end{eqnarray*}
for all $\alpha\in\{1,\,2,\,\ldots,\,\sigma\}$. Note that the size
of the largest of the blocks $N_{\alpha}$ is no greater than
$rn\times rn$. And this largest block must vanish in $rp$ steps
because $(A-LC)^{p}=0$. By the time the largest block has vanished
the other blocks must have also vanished. We deduce therefore
\begin{eqnarray}\label{eqn:Npr}
N^{rp}=0\,.
\end{eqnarray}
Equations \eqref{eqn:zsystem} and \eqref{eqn:Npr} yield
\begin{eqnarray*}
\ex(k)=[\one\ell^{T}\otimes(A+L(Q-I_{m})C)^{k}]\ex(0)
\end{eqnarray*}
for all $k\geq rp$. In other words,  all the solutions
$x_{i}(\cdot)$ converge (in deadbeat fashion) to the trajectory
\begin{eqnarray*}
{\bar x}(k)=(A+LQC-LC)^{k}\bar{x}(0)
\end{eqnarray*}
where $\bar x(0)=(\ell^{T}\otimes I_{n})\ex(0)$.
\end{proof}

\begin{remark}
The proof of Theorem~\ref{thm:linsync} reveals that the
array~\eqref{eqn:moretempting} would still achieve synchronization
(not in finite number of steps, but asymptotically) with deadbeat
interconnection even if the matrix $A-LC$ was not nilpotent, but
only convergent, i.e., $(A-LC)^k\to 0$ as $k\to\infty$.
\end{remark}

\begin{remark}\label{rem:timeinvariance}
Another point made apparent by the proof of
Theorem~\ref{thm:linsync} is the following. Instead of the fixed
interconnection $\vay\mapsto(G\otimes Q)\vay$ if we employed a
time-varying one with the structure $\vay\mapsto(G\otimes
Q_{k})\vay$, the array~\eqref{eqn:moretempting} would still
achieve deadbeat synchronization (with the same deadbeat horizon)
regardless of how matrix $Q_{k}$ changes in time.
\end{remark}

Let us go through the conditions required for
Theorem~\ref{thm:linsync} in order to generate their nonlinear
counterparts. One condition is that the matrix $A-LC$ is
nilpotent. This is equivalent to that the
system~(\ref{eqn:luenberger}b) is a deadbeat observer for the
system~(\ref{eqn:luenberger}a), which suggests for the nonlinear
case that each individual system of the array is a deadbeat
observer. Recall that, under
Assumptions~\ref{assume:bijective}-\ref{assume:singleton}, we know
how to construct a deadbeat observer, see Theorem~\ref{thm:dbobs}.
Another condition in Theorem~\ref{thm:linsync} is that
$[\gamma_{ij}]$ is a deadbeat coupling matrix. This translates
into that the map $\vay\mapsto([\gamma_{ij}]\otimes Q)\vay$ is a
deadbeat interconnection. Hence the following assumption, which we
will later need for the main theorem.

\begin{assumption}\label{assume:interconn}
The map $\gamma=(\gamma_{1},\,\gamma_{2},\,\ldots,\,\gamma_{q})$
is a deadbeat interconnection with deadbeat horizon $r$.
\end{assumption}

Consider now, in the light of Theorem~\ref{thm:dbobs} and under
Assumptions~\ref{assume:bijective}-\ref{assume:interconn}, the
following array
\begin{eqnarray}\label{eqn:dbsync}
x_{i}^{+}&=&f([x_{i}]_{p-2}^{+}\cap h^{-1}(w_{i}))\,,\qquad
i=1,\,2,\,\ldots,\,q\\
w_{i}&=&\gamma_{i}(h(x_{1}),\,h(x_{2}),\,\ldots,\,h(x_{q}))\,.\nonumber
\end{eqnarray}
Does the array~\eqref{eqn:dbsync} achieve deadbeat
synchronization? The answer to this question is negative and that
is why we will eventually need to make a fourth assumption. A
counterexample, which is indeed linear, is as follows.

We take $\X=\Real^{4}$ and $\Y=\Real^{2}$. Let $f(x)=Ax$ and
$h(x)=Cx$ with
\begin{eqnarray*}
A=\left[\begin{array}{rrrr}0&-1&1&0\\-1&0&0&-1\\1&0&0&0\\
-1&-1&-1&1\end{array}\right]\quad\mbox{and}\quad
C=\left[\begin{array}{rrrr}0&-1&1&-1\\-1&1&0&1\end{array}\right]
\end{eqnarray*}
Assumption~\ref{assume:bijective} is satisfied since $A$ is
nonsingular. Assumption~\ref{assume:singleton} is also satisfied
(with $p=2$) because
\begin{eqnarray*}
[x]_{0}^{+}\cap h^{-1}(y)=x+H(y-Cx)
\end{eqnarray*}
where $H$ can be computed as
\begin{eqnarray*}
H = AC^{\perp}(CAC^{\perp})^{-1}=\left[\begin{array}{rr}-1&-5/4\\1&6/4\\0&-1/4\\
-2&-7/4\end{array}\right]
\end{eqnarray*}
Note that $(A-AHC)^2=0$. As for the deadbeat interconnection, we
take $\gamma(\vay)=\Gamma\vay$ where $\Gamma\in\Real^{4\times 4}$
is as in \eqref{eqn:struc2}, which is known to satisfy
Assumption~\ref{assume:interconn} without admitting matrices
$G,\,Q\in\Real^{2\times 2}$ to realize $G\otimes Q=\Gamma$. (We
especially want to emphasize here that the special
structure~\eqref{eqn:struc1} of the interconnection assumed in
Theorem~\ref{thm:linsync} is not merely for demonstrational
convenience. The structure~\eqref{eqn:struc1} does indeed play a
role in achieving synchronization.) Hence the triple
$(f,\,h,\,\gamma)=(A,\,C,\,\Gamma)$ satisfies
Assumptions~\ref{assume:bijective}-\ref{assume:interconn}. Under
our parameter choice the array~\eqref{eqn:dbsync} enjoys the
following form
\begin{eqnarray*}
\left[\begin{array}{c}x_{1}\\
x_{2}\end{array}\right]^{+}=\underbrace{(I_{2}\otimes A)(I_{8}
+(I_{2}\otimes H)(\Gamma-I_{4})(I_{2}\otimes C))}_{\displaystyle\Phi}\left[\begin{array}{c}x_{1}\\
x_{2}\end{array}\right]
\end{eqnarray*}
which does not achieve deadbeat synchronization. Because if it did
then it would necessarily require that the matrix
$\Phi\in\Real^{8\times 8}$ has at least $(q-1)n=4$ of its
eigenvalues at the origin. However, the characteristic polynomial
of $\Phi$ turns out to be
$d(s)=s^8-3.5s^7-1.5s^6+11.5s^5-2.5s^4-8s^3-2s^2$.

This example justifies that
Assumptions~\ref{assume:bijective}-\ref{assume:interconn} are not
sufficient and additional conditions are needed for the deadbeat
synchronization of the array~\eqref{eqn:dbsync}. Let us now
provide (in Definition~\ref{def:compatible}) one such condition.
First, however, we need to introduce some notation associated with
the triple $(f,\,h,\,\gamma)$.

Recall $\vay=(y_{1},\,y_{2},\,\ldots,\, y_{q})$ and
${\ex}=(x_{1},\,x_{2},\,\ldots,\, x_{q})$. By some abuse of
notation we let
$f({\ex})=(f(x_{1}),\,f(x_{2}),\,\ldots,\,f(x_{q}))$. For
instance, if $f(x)=Ax$, then $f(\ex)=(I_{q}\otimes A)\ex$.
Likewise, $h({\ex})=(h(x_{1}),\,h(x_{2}),\,\ldots,\,h(x_{q}))$.
Also, for $\sigma\in\Natural$, we define
\begin{eqnarray*}
\Y_{\sigma}:=\{\vay\in\Y^{q}:\gamma^{\sigma}(\vay)\in\Y_{0}\}
\end{eqnarray*}
where
\begin{eqnarray*}
\Y_{0}:=\{\vay\in\Y^{q}:y_{1}=y_{2}=\cdots=y_{q}\}\,.
\end{eqnarray*}

\begin{remark}
The notation introduced here allows us to rephrase
Definition~\ref{def:dbint} more compactly as follows. A deadbeat
interconnection $\gamma:\Y^{q}\to\Y^{q}$ is such that
$\gamma(\Y_{0})\subset\Y_{0}$ and $\gamma^r(\Y^{q})\subset\Y_{0}$
for some $r\geq 1$.
\end{remark}

\begin{definition}\label{def:compatible}
The triple $(f,\,h,\,\gamma)$ is said to be {\em compatible} if,
for all $\sigma\geq 1$ and $\ex\in\X^{q}$,
\begin{eqnarray*}
h(f^{k}(\ex))\in\Y_{\sigma}\quad \forall
k\in\{0,\,1,\,\ldots,\,p-1\}\implies h(f^{p}(\ex))\in\Y_{\sigma}
\end{eqnarray*}
where $p$ is as in Assumption~\ref{assume:singleton}.
\end{definition}

Here is our last assumption.

\begin{assumption}\label{assume:compatibl}
The triple $(f,\,h,\,\gamma)$ is compatible.
\end{assumption}

We need some sort of justification for this assumption. When
studying nonlinear systems, a first step towards forming an
opinion on whether an assumption is too restrictive or not for the
goal to be achieved is to see what it boils down to for linear
systems. For this purpose we want to point out that, for the
linear array~\eqref{eqn:moretempting}, compatibility is implied by
Assumptions~\ref{assume:bijective}-\ref{assume:interconn} whenever
the matrix $Q$ is nonsingular. The next theorem formalizes this.

\begin{theorem}\label{thm:linear}
Let $A\in\Real^{n\times n}$ be nonsingular and $C\in\Real^{m\times
n}$ be such that there exists $L\in\Real^{n\times m}$ satisfying
$(A-LC)^{p}=0$. Then, given $Q\in\Real^{m\times m}$ nonsingular
and a deadbeat coupling matrix $G\in\Real^{q\times q}$, the triple
$(A,\,C,\,G\otimes Q)$ is compatible.
\end{theorem}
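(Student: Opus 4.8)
The plan is to unwind the definition of compatibility for the specific linear data $(f,h,\gamma)=(x\mapsto Ax,\ x\mapsto Cx,\ \vay\mapsto(G\otimes Q)\vay)$ and show that the hypothesis ``$h(f^k(\ex))\in\Y_\sigma$ for $k=0,\dots,p-1$'' already forces $\ex$ to lie in a subspace from which $h(f^p(\ex))\in\Y_\sigma$ follows automatically. First I would translate the membership conditions into linear-algebraic ones: writing $\ex=(x_1,\dots,x_q)$ and recalling $h(f^k(\ex))=(I_q\otimes CA^k)\ex$, the condition $h(f^k(\ex))\in\Y_\sigma$ reads $\gamma^\sigma(h(f^k(\ex)))\in\Y_0$, i.e. $(G^\sigma\otimes Q)(I_q\otimes CA^k)\ex\in\Y_0$, which (since $G^\sigma\otimes Q = (G^\sigma\otimes I_m)(I_q\otimes Q)$ and the $\Y_0$ condition is a statement about equality of the $q$ blocks) becomes a condition of the form ``all $q$ blocks of $(G^\sigma\otimes QCA^k)\ex$ are equal.'' The key simplification is that $Q$ is nonsingular, so it can be stripped off: equality of the blocks of $(G^\sigma\otimes QCA^k)\ex$ is equivalent to equality of the blocks of $(G^\sigma\otimes CA^k)\ex$ — more precisely, $G^\sigma$ acts on the $q$ ``slots'' and $QCA^k$ acts within each slot, and invertibility of $Q$ means the slot-wise equality is unaffected. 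So the hypothesis is equivalent to $(G^\sigma\otimes CA^k)\ex\in\Y_0$ for $k=0,1,\dots,p-1$.

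Next I would exploit the structure of $G^\sigma$. Since $G$ is a deadbeat coupling matrix, $\one$ is a right eigenvector for eigenvalue $1$ and all other eigenvalues are zero, so there is a left eigenvector $\ell$ with $\ell^T\one=1$ and $G^r=\one\ell^T$; more usefully, $\Y_0 = \mathrm{range}(\one\otimes I_m)$ and the ``quotient'' map detecting membership in $\Y_0$ can be taken to be $M\otimes I_m$ where $M\in\Real^{(q-1)\times q}$ has rows spanning the annihilator issues — concretely, $\vay\in\Y_0$ iff $(M\otimes I_m)\vay=0$ for a matrix $M$ with $M\one=0$ and $\mathrm{rank}\,M=q-1$. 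Then $(G^\sigma\otimes CA^k)\ex\in\Y_0$ becomes $(MG^\sigma\otimes CA^k)\ex=0$. Now here is the crucial point: the condition $(A-LC)^p=0$ expands to $A^p = \sum_{j=0}^{p-1}\binom{p}{?}(\text{terms each containing a factor }LCA^j)$ — more cleanly, $A^p$ lies in the left ideal generated by $\{CA^k : 0\le k\le p-1\}$, i.e. there exist matrices $R_0,\dots,R_{p-1}$ with $CA^p = \sum_{k=0}^{p-1} R_k CA^k$ (this is exactly the statement that $(A-LC)^p=0$ gives a linear recurrence for the observability-type sequence $CA^k$; indeed $C(A-LC)^p=0$ expands, using $LC$ only on the left, into such a relation). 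With that recurrence in hand, $(MG^\sigma\otimes CA^p)\ex = \sum_k (MG^\sigma\otimes R_k CA^k)\ex = \sum_k (I\otimes R_k)(MG^\sigma\otimes CA^k)\ex = 0$, since each summand vanishes by the hypothesis. Reversing the earlier translation (re-inserting $Q$, harmless since we only need one direction now) gives $h(f^p(\ex))\in\Y_\sigma$, which is the desired conclusion.

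The step I expect to be the main obstacle is making the ``strip off $Q$'' and ``$\Y_0$-membership as a kernel condition'' manipulations precise with the Kronecker bookkeeping: one must be careful that $G^\sigma\otimes Q$ decomposes as the block-permutation-like action $G^\sigma\otimes I_m$ composed with the intra-block action $I_q\otimes Q$, and that these commute, so that detecting $\Y_0$-membership after applying $I_q\otimes Q$ is the same as detecting it before — this is where nonsingularity of $Q$ is used and it is genuinely needed (the counterexample with $\Gamma$ not of the form $G\otimes Q$ shows compatibility can fail without such structure). The derivation of the recurrence $CA^p=\sum_{k=0}^{p-1}R_kCA^k$ from $(A-LC)^p=0$ is the other point requiring a clean argument; I would get it by expanding $(A-LC)^p$ and collecting all terms — every term except $A^p$ contains at least one factor $LC$, and by pushing the leftmost such factor to the front one writes the whole expansion as $A^p = L\cdot(\text{stuff})$ plus lower-degree-in-$A$ pieces, but the cleanest route is: $(A-LC)^p=0$ implies for the pair $(A,C)$ the Cayley–Hamilton-like identity that $A^p$ acts on any $x$ the same as a combination of $A^0 x,\dots,A^{p-1}x$ modulo $\bigcap_{k<p}\ker(CA^k)$, which is precisely $CA^p x\in\mathrm{span}\{CA^kx: k<p\}$ uniformly, i.e. the matrix recurrence above. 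The rest is routine linear algebra.
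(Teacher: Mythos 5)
Your proposal is correct and follows essentially the same route as the paper: both exploit nonsingularity of $Q$ to turn membership in $\Y_{\sigma}$ into a Kronecker-structured kernel condition that is preserved under maps of the form $I\otimes M_{k}$, and both rest on a recurrence $CA^{p}=\sum_{k=0}^{p-1}M_{k}CA^{k}$ with $m\times m$ coefficients. The only notable variation is that you derive this recurrence by directly expanding $C(A-LC)^{p}=0$ and collecting the words containing a factor $LC$, whereas the paper deduces it from the $p$-step observability rank condition (itself obtained from $(A-LC)^{p}=0$ together with $A$ nonsingular); both derivations are valid.
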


\begin{proof}
Since $G$ is a deadbeat coupling matrix, for some integer $r\leq
q-1$, we have $G^{r}=G^{r+1}=\one\ell^{T}$ where
$\ell\in\Real^{q}$ is the left eigenvector of $G$ for the
eigenvalue $\lambda=1$ satisfying $\ell^{T}\one=1$. That $Q$ is
nonsingular allows us to write
$\Y_{\sigma}=\N((G^{\sigma}-\one\ell^{T})\otimes I_{m})$. Given
$\sigma$, let $\{v_{1},\,v_{2},\,\ldots,\,v_{\alpha}\}$ be a basis
for $\N(G^{\sigma}-\one\ell^{T})$ and
$\{e_{1},\,e_{2},\,\ldots,\,e_{m}\}$ be a basis for $\Real^{m}$.
Then the set $\bigcup_{i,\,j}\{v_{i}\otimes e_{j}\}$ makes a basis
for $\Y_{\sigma}$. Now, given $\ex\in\Real^{qn}$, suppose
$(I_{q}\otimes CA^{k})\ex\in\Y_{\sigma}$ for all
$k=0,\,1,\,\ldots,\,p-1$. Note that we can find scalars
$\alpha_{kij}$ such that
\begin{eqnarray*}
(I_{q}\otimes
CA^{k})\ex=\sum_{i=1}^{\alpha}\sum_{j=1}^{m}\alpha_{kij}(v_{i}\otimes
e_{j})\,.
\end{eqnarray*}
Since $A$ is nonsingular, that we can find some $L$ satisfying
$(A-LC)^p=0$ implies $[C^{T}\ \ A^{T}C^{T}\ \ \ldots$
$A^{(p-1)T}C^{T}]$ is full row rank. Hence, for each
$k=0,\,1,\,\ldots,\,p-1$, we can find $M_{k}\in\Real^{m\times m}$
such that
\begin{eqnarray*}
\sum_{k=0}^{p-1}M_{k}CA^{k}=CA^{p}\,.
\end{eqnarray*}
We can then write
\begin{eqnarray*}
(I_{q}\otimes CA^{p})\ex &=&\sum_{k=0}^{p-1}(I_{q}\otimes
M_{k}CA^{k})\ex\\
&=&\sum_{k=0}^{p-1}(I_{q}\otimes M_{k})(I_{q}\otimes
CA^{k})\ex\\
&=&\sum_{k=0}^{p-1}(I_{q}\otimes
M_{k})\sum_{i=1}^{\alpha}\sum_{j=1}^{m}\alpha_{kij}(v_{i}\otimes e_{j})\\
&=&\sum_{k=0}^{p-1}\sum_{i=1}^{\alpha}\sum_{j=1}^{m}\alpha_{kij}(v_{i}\otimes
M_{k}e_{j})\\
&\in&\Y_{\sigma}
\end{eqnarray*}
because $((G^{\sigma}-\one\ell^{T})\otimes I_{m})(v_{i}\otimes
M_{k}e_{j})=(G^{\sigma}-\one\ell^{T})v_{i}\otimes M_{k}e_{j}=0$.
\end{proof}
\medskip\smallskip

The next theorem is our main result, which says that if a number
of identical deadbeat observers are coupled through a deadbeat
interconnection, the array achieves synchronization in finite
number of steps provided that the observer and the interconnection
satisfy the compatibility condition of
Definition~\ref{def:compatible}.

\begin{theorem}\label{thm:main}
Under Assumptions~\ref{assume:bijective}-\ref{assume:compatibl},
the array~\eqref{eqn:dbsync} achieves deadbeat synchronization
with deadbeat horizon $\tau=rp$.
\end{theorem}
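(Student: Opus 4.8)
The plan is to phase the argument entirely in terms of the output sequences and the nested family $\Y_{0}\subset\Y_{1}\subset\cdots\subset\Y_{r}=\Y^{q}$; here the inclusions and the identity $\Y_{r}=\Y^{q}$ are immediate from Definition~\ref{def:dbint} (note $\vay\in\Y_{\sigma}$ iff $\gamma(\vay)\in\Y_{\sigma-1}$, and $\gamma^{r}(\Y^{q})\subset\Y_{0}$). Let $\ex(k)=(x_{1}(k),\ldots,x_{q}(k))$ be a solution of~\eqref{eqn:dbsync}, put $\vay(k):=h(\ex(k))$ and $\dabilyu(k):=\gamma(\vay(k))=(w_{1}(k),\ldots,w_{q}(k))$, and write $\X_{0}:=\{\ex\in\X^{q}:x_{1}=\cdots=x_{q}\}$. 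Deadbeat synchronization with $\tau=rp$ is exactly the claim that $\ex(k)\in\X_{0}$ for all $k\geq rp$.

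First I would unwind the observer map. By Assumption~\ref{assume:singleton} the point $z_{i}(k):=[x_{i}(k)]_{p-2}^{+}\cap h^{-1}(w_{i}(k))$ is well defined; it satisfies $h(z_{i}(k))=w_{i}(k)$, $h(f^{-j}(z_{i}(k)))=h(f^{-j}(x_{i}(k)))$ for $j=1,\ldots,p-1$, and $x_{i}(k+1)=f(z_{i}(k))$. A short induction on $k$ gives the memory identity
\begin{eqnarray*}
h(f^{-l}(x_{i}(k)))=w_{i}(k-l)\,,\qquad 1\leq l\leq\min\{k,p\}\,,
\end{eqnarray*}
so that for $k\geq p$ the preimage $z_{i}(k-1)=f^{-1}(x_{i}(k))$ is characterized as the unique state whose backward output string $h(\cdot),h(f^{-1}(\cdot)),\ldots,h(f^{-(p-1)}(\cdot))$ equals $w_{i}(k-1),w_{i}(k-2),\ldots,w_{i}(k-p)$. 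Introducing $\xi_{i}(k):=f^{-p}(x_{i}(k))$ and $\xi(k):=(\xi_{1}(k),\ldots,\xi_{q}(k))\in\X^{q}$, this reads, for $k\geq p$,
\begin{eqnarray*}
h(f^{l}(\xi(k)))=\dabilyu(k-p+l)=\gamma(\vay(k-p+l))\,,\qquad l=0,\ldots,p-1\,,
\end{eqnarray*}
together with $f^{p}(\xi(k))=\ex(k)$, hence $h(f^{p}(\xi(k)))=\vay(k)$. I would also record, from Assumption~\ref{assume:singleton} and bijectivity of $f$, that $[u]_{p-1}=\{u\}$ for every $u$, so the \emph{forward} string $h(u),h(f(u)),\ldots,h(f^{p-1}(u))$ determines $u$ uniquely.

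The core is the following transfer step, valid for $k\geq p$. If $\vay(m)\in\Y_{\sigma+1}$ for $m=k-p,\ldots,k-1$, then $h(f^{l}(\xi(k)))=\gamma(\vay(k-p+l))\in\Y_{\sigma}$ for $l=0,\ldots,p-1$; so when $\sigma\geq 1$, compatibility (Definition~\ref{def:compatible}, applied to $\xi(k)$) gives $h(f^{p}(\xi(k)))=\vay(k)\in\Y_{\sigma}$, while when $\sigma=0$ the hypothesis says $\xi_{1}(k),\ldots,\xi_{q}(k)$ share one forward string of length $p$, so they coincide by forward observability and therefore $\ex(k)=f^{p}(\xi(k))\in\X_{0}$. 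Now I would run an induction on $t$: the base case $t=0$ is the triviality $\vay(m)\in\Y^{q}=\Y_{r}$ for all $m$, and at stage $t\in\{1,\ldots,r\}$ the transfer step with $\sigma=r-t$ (applied at every $k=m\geq tp\geq p$, with the previous stage supplying $\vay(m-p),\ldots,\vay(m-1)\in\Y_{r-t+1}$) produces $\vay(m)\in\Y_{r-t}$ for all $m\geq tp$ when $t<r$, and $\ex(m)\in\X_{0}$ for all $m\geq rp$ when $t=r$. The last conclusion is the theorem. (That $\X_{0}$ is forward invariant under~\eqref{eqn:dbsync}, by the second bullet of Definition~\ref{def:dbint}, is a reassuring check but is not actually needed.)

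The step I expect to take the most care is the bookkeeping behind the memory identity and the reconstruction $f^{p}(\xi(k))=\ex(k)$: one must verify that the observer's internal memory $[x_{i}(k)]_{p-2}^{+}$ stores the last $p$ \emph{driving} values $w_{i}(\cdot)$ and not the last $p$ \emph{outputs} $y_{i}(\cdot)$ (which differ because the array evolves under the observer dynamics, not under $x^{+}=f(x)$), so that $f^{-p}(\ex(k))$ carries precisely the forward output trace $(\dabilyu(k-p),\ldots,\dabilyu(k-1))$ and hence meets the hypothesis of the compatibility condition; and the index ranges must be kept tight enough that the induction on $t$ closes at exactly $m\geq tp$. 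The rest is bookkeeping: the filtration $\{\Y_{\sigma}\}$ is dictated by Definition~\ref{def:dbint}, Assumption~\ref{assume:compatibl} is precisely what lets one level of it be shed every $p$ steps, and the horizon $rp$ is the product of the $r$-step length of the filtration and the $p$-step depth of the observer.
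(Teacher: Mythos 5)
Your proof is correct and takes essentially the same route as the paper's: your ``memory identity'' $h(f^{-l}(x_{i}(k)))=w_{i}(k-l)$ is precisely the content of Lemmas~\ref{lem:one}--\ref{lem:two} (you derive it more directly by characterizing $[x]_{p-2}^{+}$ as a backward-output-string equivalence class, where the paper uses the set-valued calculus of Lemma~\ref{lem:list}), and your staged descent through $\Y_{r}\supset\cdots\supset\Y_{0}$ via compatibility is exactly the paper's implication~\eqref{eqn:cete}, with your final appeal to $[u]_{p-1}=\{u\}$ matching the paper's concluding intersection computation. No gaps; the presentation is, if anything, a transparent repackaging of the same argument.
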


The demonstration of Theorem~\ref{thm:main} requires some
preliminary results, which we provide in the sequel as three
lemmas. (The lemmas will implicitly posit
Assumptions~\ref{assume:bijective}-\ref{assume:interconn}.) The
first of these results concerns some properties of the set-valued
functions $x\mapsto[x]_{k}$ and $x\mapsto[x]_{k}^{+}$.
\medskip\smallskip

\noindent{\bf Caveat.} Henceforth, confined to this section only,
we will avoid the standard use of parentheses when the risk of
confusion is negligible. For instance, $h^{-1}(h(x))$ will be
replaced by $h^{-1}hx$.

\begin{lemma}\label{lem:list}
Let $x\in\X$ and $k,\,\alpha\in\Natural$. The following hold.
\begin{enumerate}
\item $x\in[x]_{k}$ and $x\in[x]_{k}^{+}$.

\item $[[x]_{k+\alpha}]_{k}=[x]_{k}$ and
$[[x]_{k+\alpha}^{+}]_{k}^{+}=[x]_{k}^{+}$.
\end{enumerate}
\end{lemma}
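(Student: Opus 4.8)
The plan is to unwind the two mutually recursive definitions into a single closed description of the sets, and then read both items off that description. Concretely, I would first show by induction on $k\in\Natural$ that for every $x\in\X$
\begin{eqnarray*}
[x]_k &=& \{z\in\X:\ hf^{-j}z=hf^{-j}x\ \text{for}\ j=0,1,\ldots,k\}\,,\\
{}[x]_k^+ &=& \{z\in\X:\ hf^{-j}z=hf^{-j}x\ \text{for}\ j=1,2,\ldots,k+1\}\,.
\end{eqnarray*}
The base $k=0$ is immediate: $[x]_0=h^{-1}hx$ gives the first identity, and $[x]_0^+=f([f^{-1}x]_0)=f(\{z:hz=hf^{-1}x\})$ equals $\{w:hf^{-1}w=hf^{-1}x\}$ because $f$ is a bijection (Assumption~\ref{assume:bijective}), which lets one move $f$ through the set-builder. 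For the step, $[x]_{k+1}=[x]_k^+\cap[x]_0$ intersects the constraint ranges $\{1,\ldots,k+1\}$ and $\{0\}$ into $\{0,1,\ldots,k+1\}$, and then $[x]_{k+1}^+=f([f^{-1}x]_{k+1})$, using the just-established formula for $[\,\cdot\,]_{k+1}$ at the point $f^{-1}x$ together with bijectivity of $f$, shifts the range $\{0,\ldots,k+1\}$ up to $\{1,\ldots,k+2\}$.

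Both items then follow at once. Item~1 holds because the constraints defining $[x]_k$ and $[x]_k^+$ read $hf^{-j}x=hf^{-j}x$, hence are satisfied by $z=x$. For item~2, reading the image of a set under the set-valued map $[\,\cdot\,]_k$ as $[S]_k=\bigcup_{z\in S}[z]_k$ (and similarly for $[\,\cdot\,]_k^+$): any $z\in[x]_{k+\alpha}$ satisfies $hf^{-j}z=hf^{-j}x$ for $j=0,\ldots,k+\alpha$, in particular for $j=0,\ldots,k$, so the closed formula gives $[z]_k=[x]_k$; taking the union over all such $z$, which is a union over a nonempty set since $x\in[x]_{k+\alpha}$ by item~1, yields $[[x]_{k+\alpha}]_k=[x]_k$. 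The same argument with the index range $\{1,\ldots,k+\alpha+1\}\supseteq\{1,\ldots,k+1\}$ gives $[z]_k^+=[x]_k^+$ for every $z\in[x]_{k+\alpha}^+$ and hence $[[x]_{k+\alpha}^+]_k^+=[x]_k^+$.

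I do not expect a real obstacle; the content is bookkeeping. The one place that needs care is the index arithmetic in the inductive step, where each application of $f$ or of $f^{-1}$ shifts the list of output constraints by one position, and where Assumption~\ref{assume:bijective} is genuinely used — for a non-injective $f$ one would only get $f(\{z:\cdots\})\subseteq\{w:\cdots\}$ and the closed form would break. A minor point to spell out is the convention $[S]_k=\bigcup_{z\in S}[z]_k$ for a set $S$, and the fact (needed for item~2) that this union is over a nonempty set, which is exactly item~1 applied with the larger subscript $k+\alpha$. If one prefers to avoid the closed form, item~1 can instead be proved directly by a short simultaneous induction ($x\in[x]_0$ is the base; $x\in[x]_k$ implies $x\in[x]_k^+$ by applying $f$ to $f^{-1}x\in[f^{-1}x]_k$; and $x\in[x]_k^+$ together with $x\in[x]_0$ gives $x\in[x]_{k+1}$), after which item~2 needs the auxiliary refinement fact ``$z\in[x]_{k+\alpha}\Rightarrow[z]_k=[x]_k$'' established by a further induction — but the closed-form route subsumes all of this and is what I would write.
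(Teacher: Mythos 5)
Your proof is correct, and it takes a genuinely different route from the paper's. The paper never writes down what $[x]_{k}$ and $[x]_{k}^{+}$ actually are: it proves item~1 by exactly the direct simultaneous induction you sketch at the end as a fallback, and then gets item~2 from two separately inducted auxiliary facts --- the equivalence-class property $z\in[x]_{k}\implies[z]_{k}=[x]_{k}$ and the nesting $[x]_{k+\alpha}\subset[x]_{k}$ --- combined with a two-inclusion union argument ($\subset$ from those two facts, $\supset$ from item~1), the plus-superscript identity then following by pushing $f$ and $f^{-1}$ through the definitions. Your single closed form
\[
[x]_{k}=\{z\in\X:\,hf^{-j}z=hf^{-j}x,\ j=0,\ldots,k\},\qquad
[x]_{k}^{+}=\{z\in\X:\,hf^{-j}z=hf^{-j}x,\ j=1,\ldots,k+1\}
\]
subsumes both auxiliary facts and makes the two items immediate; I checked it against the paper's third-order illustration and it reproduces $[x]_{1}$ and $[x]_{1}^{+}$ there. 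Both routes lean on Assumption~\ref{assume:bijective} at the same spot --- you need it to move $f$ through the set-builder, the paper needs it for $z\in f[f^{-1}x]_{k}\implies f^{-1}z\in[f^{-1}x]_{k}$ --- and both need the nonemptiness of the union, which you correctly trace back to item~1. What your version buys is structural insight: it exhibits $[\cdot]_{k}$ and $[\cdot]_{k}^{+}$ as partitions of $\X$ into level sets of the backward output map $x\mapsto(hx,\,hf^{-1}x,\,\ldots,\,hf^{-k}x)$, which also makes Assumption~\ref{assume:singleton} legible as an observability-type condition. What the paper's version buys is that it manipulates only the recursive definitions, so it is closer in spirit to the set-valued calculus used later in Lemmas~\ref{lem:one} and~\ref{lem:two}.
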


\begin{proof}
We begin with proving the first property. Notice that $x \in
h^{-1}hx=[x]_{0}$. Now suppose $x\in[x]_{k}$ for some $k$. Then we
can write (since $f$ is bijective) $x=ff^{-1}x\in
f[f^{-1}x]_{k}=[x]_{k}^{+}$. Thence $x
\in[x]_{k}^{+}\cap[x]_{0}=[x]_{k+1}$. The result follows by
induction.

Now we demonstrate the second property. Observe that if
$z\in[x]_{0}$ then $hz=hx$, which implies $[z]_{0}=[x]_{0}$. We
claim for all $k$ that
\begin{eqnarray}\label{eqn:yemek}
z\in[x]_{k} \implies [z]_{k}=[x]_{k}\,.
\end{eqnarray}
To prove our claim we suppose that it holds for some $k$. Let
$x,\,z$ be such that $z\in[x]_{k+1}=f[f^{-1}x]_{k}\cap[x]_{0}$.
This means that $z\in[x]_{0}$, which yields $[z]_{0}=[x]_{0}$.
Also,
\begin{eqnarray*}
z\in f[f^{-1}x]_{k}&\implies&f^{-1}z\in [f^{-1}x]_{k}\\
&\implies&[f^{-1}z]_{k} = [f^{-1}x]_{k}\,.
\end{eqnarray*}
Then we can write
\begin{eqnarray*}
[z]_{k+1}&=&f[f^{-1}z]_{k}\cap[z]_{0}\\
&=&f[f^{-1}x]_{k}\cap[x]_{0}\\
&=&[x]_{k+1}
\end{eqnarray*}
whence \eqref{eqn:yemek} follows by induction. Notice that
$[x]_{1}\subset[x]_{0}$ since $[x]_{1}=f[f^{-1}x]_{0}\cap[x]_{0}$.
Our second claim is
\begin{eqnarray}\label{eqn:tursu}
[x]_{k+\alpha}\subset[x]_{k}
\end{eqnarray}
for all $k$ and $\alpha$. Note that it is enough to establish this
for $\alpha=1$. Again we employ induction. Suppose
\eqref{eqn:tursu} holds with $\alpha=1$ for some $k$. Then
\begin{eqnarray*}
[x]_{k+2}
&=&f[f^{-1}x]_{k+1}\cap[x]_{0}\\
&\subset&f[f^{-1}x]_{k}\cap[x]_{0}\\
&=&[x]_{k+1}\,.
\end{eqnarray*}
Now, by \eqref{eqn:yemek} and \eqref{eqn:tursu} we have
\begin{eqnarray*}
[[x]_{k+\alpha}]_{k}&=&\bigcup_{z\in[x]_{k+\alpha}}[z]_{k}\\
&\subset&\bigcup_{z\in[x]_{k}}[z]_{k}\\
&=&[x]_{k}\,.
\end{eqnarray*}
By the first property we can write
$[x]_{k}\subset[[x]_{k+\alpha}]_{k}$. Hence
$[x]_{k}=[[x]_{k+\alpha}]_{k}$. Moreover,
\begin{eqnarray*}
[[x]_{k+\alpha}^{+}]_{k}^{+}
&=& f[f^{-1}f[f^{-1}x]_{k+\alpha}]_{k}\\
&=& f[[f^{-1}x]_{k+\alpha}]_{k}\\
&=& f[f^{-1}x]_{k}\\
&=& [x]_{k}^{+}
\end{eqnarray*}
which completes the demonstration of Lemma~\ref{lem:list}.
\end{proof}
\medskip\smallskip

Note that the array~\eqref{eqn:dbsync} leads to the following
system in $\X^{q}$
\begin{eqnarray}\label{eqn:formidable}
\ex^{+}=f([\ex]_{p-2}^{+}\cap h^{-1}\gamma h\ex)
\end{eqnarray}
where
$[\ex]_{p-2}^{+}=([x_{1}]_{p-2}^{+},\,[x_{2}]_{p-2}^{+},\,\ldots,\,[x_{q}]_{p-2}^{+})$.
The next two results concern the system~\eqref{eqn:formidable}.

\begin{lemma}\label{lem:one}
Consider the system~\eqref{eqn:formidable}. For all $k\geq p-1$
the solution $\ex(k)=:\ex_{k}$ satisfies
\begin{eqnarray}\label{eqn:ayierdal}
\ex_{k}\in f^{\alpha-1}[\ex_{k-\alpha+1}]_{p-\alpha}^{+}
\end{eqnarray}
for all $\alpha\in\{1,\,2,\,\ldots,\,p\}$.
\end{lemma}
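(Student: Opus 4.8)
The plan is to fix $k\geq p-1$ and induct on $\alpha\in\{1,2,\ldots,p\}$, keeping in mind that everything lives in $\X^{q}$, that $[\ex]_{j}^{+}=([x_{1}]_{j}^{+},\ldots,[x_{q}]_{j}^{+})$ blockwise, and that $f$ acts blockwise. The base case $\alpha=1$ is immediate: \eqref{eqn:ayierdal} then reads $\ex_{k}\in[\ex_{k}]_{p-1}^{+}$, which is the first property of Lemma~\ref{lem:list} applied in each block.

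For the inductive step I would assume $\ex_{k}\in f^{\alpha-1}[\ex_{k-\alpha+1}]_{p-\alpha}^{+}$ for some $\alpha$ with $1\leq\alpha\leq p-1$ and deduce $\ex_{k}\in f^{\alpha}[\ex_{k-\alpha}]_{p-\alpha-1}^{+}$. Applying $f^{\alpha-1}$ to both sides, it suffices to show the set inclusion $[\ex_{k-\alpha+1}]_{p-\alpha}^{+}\subseteq f[\ex_{k-\alpha}]_{p-\alpha-1}^{+}$. The time $k-\alpha$ is admissible since $k\geq p-1\geq\alpha$, and the dynamics~\eqref{eqn:formidable} at that time give $\ex_{k-\alpha+1}=f\zi$ with $\zi$ the single point of $[\ex_{k-\alpha}]_{p-2}^{+}\cap h^{-1}\gamma h\ex_{k-\alpha}$ (Assumption~\ref{assume:singleton}); in particular $\zi\in[\ex_{k-\alpha}]_{p-2}^{+}$. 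Since $f$ is bijective, $f^{-1}\ex_{k-\alpha+1}=\zi$, so the definition $[w]_{j}^{+}=f[f^{-1}w]_{j}$ gives $[\ex_{k-\alpha+1}]_{p-\alpha}^{+}=f[\zi]_{p-\alpha}$, and it is enough to verify $[\zi]_{p-\alpha}\subseteq[\ex_{k-\alpha}]_{p-\alpha-1}^{+}$. Blockwise, this is an instance of the following one-step claim: for $x,z\in\X$, if $z\in[x]_{p-2}^{+}$ then $[z]_{\ell}\subseteq[x]_{\ell-1}^{+}$ for all $\ell\in\{1,\ldots,p-1\}$, taken with $\ell=p-\alpha$.

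To prove the one-step claim I would extract two small monotonicity/idempotence facts already present in the proof of Lemma~\ref{lem:list}: (i) $[x]_{a}^{+}\subseteq[x]_{b}^{+}$ when $a\geq b\geq 0$ --- push $[f^{-1}x]_{a}\subseteq[f^{-1}x]_{b}$ (inclusion~\eqref{eqn:tursu}) through $f$; and (ii) $z\in[x]_{m}^{+}\Rightarrow[z]_{m}^{+}=[x]_{m}^{+}$ --- push the implication ``$f^{-1}z\in[f^{-1}x]_{m}\Rightarrow[f^{-1}z]_{m}=[f^{-1}x]_{m}$'' of \eqref{eqn:yemek} through $f$. Then, for $\ell\in\{1,\ldots,p-1\}$ we have $\ell-1\leq p-2$, so (i) gives $z\in[x]_{p-2}^{+}\subseteq[x]_{\ell-1}^{+}$, (ii) gives $[z]_{\ell-1}^{+}=[x]_{\ell-1}^{+}$, and $[z]_{\ell}=[z]_{\ell-1}^{+}\cap[z]_{0}\subseteq[z]_{\ell-1}^{+}=[x]_{\ell-1}^{+}$, which is exactly the claim and closes the induction.

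The underlying mathematics is light --- just the nesting $[x]_{a}\subseteq[x]_{b}$ for $a\geq b$ (and its ``$+$'' version), together with the fact that the observer update is a genuine single-valued map. The main obstacle I anticipate is purely organizational: juggling the three interlocking indices ($k$, the induction variable $\alpha$, and the level $p-\alpha$) while staying disciplined about which statements concern $[\,\cdot\,]$ and which concern $[\,\cdot\,]^{+}$, since $f$ interchanges the two at every step. A check of the extreme cases $p=1$ (only $\alpha=1$ occurs, the recursion is vacuous, and $[\ex]_{p-2}^{+}=[\ex]_{-1}^{+}=\X^{q}$ does no harm) and $p=2$ reassures that the indexing is consistent.
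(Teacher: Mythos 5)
Your proposal is correct and takes essentially the same route as the paper: the same base case $\ex_{k}\in[\ex_{k}]_{p-1}^{+}$, and an inductive step that reduces to the containment $[\zi]_{p-\alpha}\subseteq[\ex_{k-\alpha}]_{p-\alpha-1}^{+}$ for the update point $\zi\in[\ex_{k-\alpha}]_{p-2}^{+}$. The only cosmetic difference is that the paper gets this by applying Lemma~\ref{lem:list}(2) to the set $[[\ex_{k-\alpha}]_{p-2}^{+}]_{p-\alpha-1}^{+}$ after dropping the $h^{-1}\gamma h$ constraint, whereas you re-derive the same step pointwise from the facts \eqref{eqn:yemek} and \eqref{eqn:tursu} on which that lemma rests.
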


\begin{proof}
Suppose \eqref{eqn:ayierdal} holds for some
$\alpha\in\{1,\,2,\,\ldots,\,p-1\}$. Then we can write by
\eqref{eqn:formidable} and Lemma~\ref{lem:list}
\begin{eqnarray*}
\ex_{k}
&\in& f^{\alpha-1}[\ex_{k-\alpha+1}]_{p-\alpha}^{+}\\
&=& f^{\alpha-1}f[f^{-1}\ex_{k-\alpha+1}]_{p-\alpha}\\
&=& f^{\alpha}[f^{-1}\ex_{k-\alpha+1}]_{p-\alpha}\\
&=& f^{\alpha}[f^{-1}f([\ex_{k-\alpha}]_{p-2}^{+}\cap h^{-1}\gamma h\ex_{k-\alpha})]_{p-\alpha}\\
&=& f^{\alpha}[[\ex_{k-\alpha}]_{p-2}^{+}\cap h^{-1}\gamma h\ex_{k-\alpha}]_{p-\alpha}\\
&\subset& f^{\alpha}[[\ex_{k-\alpha}]_{p-2}^{+}]_{p-\alpha}\\
&=& f^{\alpha}([[\ex_{k-\alpha}]_{p-2}^{+}]_{p-\alpha-1}^{+}\cap[[\ex_{k-\alpha}]_{p-2}^{+}]_{0})\\
&\subset& f^{\alpha}[[\ex_{k-\alpha}]_{p-2}^{+}]_{p-\alpha-1}^{+}\\
&=& f^{\alpha}[\ex_{k-\alpha}]_{p-\alpha-1}^{+}\,.
\end{eqnarray*}
By Lemma~\ref{lem:list} we also have
$\ex_{k}\in[\ex_{k}]_{p-1}^{+}$ which verifies
\eqref{eqn:ayierdal} for $\alpha=1$. The result then follows by
induction.
\end{proof}

\begin{lemma}\label{lem:two}
Consider the system~\eqref{eqn:formidable}. For all $k\geq p$ the
solution satisfies
\begin{eqnarray*}
hf^{-\alpha}\ex_{k}=\gamma h\ex_{k-\alpha}
\end{eqnarray*}
for all $\alpha\in\{1,\,2,\,\ldots,\,p\}$.
\end{lemma}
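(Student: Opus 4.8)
The plan is to reduce the general identity to the case $\alpha=1$, which is immediate from the dynamics, and then to propagate it one step backwards in time at a stretch of $\alpha-1$ applications of $f^{-1}$, using Lemma~\ref{lem:one} to control how far $f^{-(\alpha-1)}\ex_k$ can drift from $\ex_{k-\alpha+1}$.

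First I would settle $\alpha=1$ (for every $k\geq 1$, which in particular covers $k\geq p$). Writing \eqref{eqn:formidable} as $\ex_k=f(\ex_{k-1}')$ with $\ex_{k-1}'\in[\ex_{k-1}]_{p-2}^{+}\cap h^{-1}\gamma h\ex_{k-1}$, note that this intersection is a singleton by Assumption~\ref{assume:singleton} and that $f$ is invertible by Assumption~\ref{assume:bijective}, so $\ex_{k-1}'=f^{-1}\ex_k$. Hence $f^{-1}\ex_k\in h^{-1}\gamma h\ex_{k-1}$, i.e.\ $hf^{-1}\ex_k=\gamma h\ex_{k-1}$.

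Next, for $\alpha\in\{2,\ldots,p\}$ and $k\geq p$, I would invoke Lemma~\ref{lem:one} to obtain $\ex_k\in f^{\alpha-1}[\ex_{k-\alpha+1}]_{p-\alpha}^{+}$. Applying the bijection $f^{-(\alpha-1)}$ and unfolding $[x]_{j}^{+}=f([f^{-1}x]_{j})$ gives $f^{-\alpha}\ex_k\in[f^{-1}\ex_{k-\alpha+1}]_{p-\alpha}$. Since $p-\alpha\geq 0$, the inclusion $[z]_{p-\alpha}\subset[z]_{0}=h^{-1}hz$ (a special case of \eqref{eqn:tursu} from the proof of Lemma~\ref{lem:list}, trivial when $p=\alpha$) yields $hf^{-\alpha}\ex_k=hf^{-1}\ex_{k-\alpha+1}$. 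Because $k-\alpha+1\geq p-\alpha+1\geq 1$, the $\alpha=1$ identity applies at time $k-\alpha+1$ and gives $hf^{-1}\ex_{k-\alpha+1}=\gamma h\ex_{k-\alpha}$; chaining the two equalities completes the proof.

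I do not expect a genuine obstacle; the only thing to watch is the index arithmetic — one needs $p-\alpha\geq 0$ for the set $[\,\cdot\,]_{p-\alpha}$ (and its containment in $[\,\cdot\,]_{0}$) to be available, and $k-\alpha+1\geq 1$ for the base case to be usable at the shifted time, and both follow from $1\leq\alpha\leq p$ together with $k\geq p$. Throughout, the operators $f$, $f^{-1}$, $h$, $h^{-1}$, $[\,\cdot\,]_{j}$, $[\,\cdot\,]_{j}^{+}$ act componentwise on $\X^{q}$, so Lemmas~\ref{lem:list} and~\ref{lem:one} apply coordinatewise and the argument above is legitimate in $\X^{q}$.
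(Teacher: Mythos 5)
Your proof is correct and follows essentially the same route as the paper's: both arguments start from the inclusion $\ex_{k}\in f^{\alpha-1}[\ex_{k-\alpha+1}]_{p-\alpha}^{+}$ supplied by Lemma~\ref{lem:one}, unfold $[\,\cdot\,]_{p-\alpha}^{+}$, and use the nestedness $[\,\cdot\,]_{p-\alpha}\subset[\,\cdot\,]_{0}$ together with the singleton dynamics to land on $\gamma h\ex_{k-\alpha}$. The only cosmetic difference is that you factor the argument through the explicit $\alpha=1$ base case, whereas the paper substitutes the dynamics for $f^{-1}\ex_{k-\alpha+1}$ directly inside a single chain of inclusions.
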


\begin{proof}
By Lemma~\ref{lem:one} we can write
\begin{eqnarray*}
hf^{-\alpha}\ex_{k}&\in&hf^{-\alpha}f^{\alpha-1}[\ex_{k-\alpha+1}]_{p-\alpha}^{+}\\
&=&hf^{-1}[\ex_{k-\alpha+1}]_{p-\alpha}^{+}\\
&=&hf^{-1}f[f^{-1}\ex_{k-\alpha+1}]_{p-\alpha}\\
&=&h[[\ex_{k-\alpha}]_{p-2}^{+}\cap h^{-1}\gamma h\ex_{k-\alpha}]_{p-\alpha}\\
&\subset&h[h^{-1}\gamma h\ex_{k-\alpha}]_{p-\alpha}\\
&\subset&h[h^{-1}\gamma h\ex_{k-\alpha}]_{0}\\
&=&hh^{-1}hh^{-1}\gamma h\ex_{k-\alpha}\\
&=&\gamma h\ex_{k-\alpha}
\end{eqnarray*}
where we used \eqref{eqn:formidable} and Lemma~\ref{lem:list}.
\end{proof}
\medskip\smallskip

\noindent {\bf Proof of Theorem~\ref{thm:main}.} Consider the
system~\eqref{eqn:formidable}. Given some $k\geq p$ and
$\sigma\geq 1$, suppose $h\ex_{k-\alpha}\in\Y_{\sigma}$ for all
$\alpha\in\{1,\,2,\,\ldots,\,p\}$. Then by Lemma~\ref{lem:two} we
have
\begin{eqnarray*}
hf^{-\alpha}\ex_{k}
&=&\gamma h\ex_{k-\alpha}\\
&\in&\gamma\Y_{\sigma}\\
&=&\Y_{\sigma-1}
\end{eqnarray*}
for all $\alpha\in\{1,\,2,\,\ldots,\,p\}$. By compatibility
therefore $h\ex_{k}\in\Y_{\sigma-1}$. Hence we established
\begin{eqnarray}\label{eqn:cete}
h\ex_{k-\alpha}\in\Y_{\sigma}\quad\forall
\alpha\in\{1,\,2,\,\ldots,\,p\}\implies
h\ex_{k}\in\Y_{\sigma-1}\,.
\end{eqnarray}
Now note that $\Y_{\sigma-1}\subset\Y_{\sigma}$ by definition and
$\Y_{r}=\Y^{q}$, where $r\geq 1$ satisfies
$\gamma^{r}\Y^{q}=\Y_{0}$ because $\gamma$ is a deadbeat
interconnection. In the light of these facts, \eqref{eqn:cete}
implies $h\ex_{\tau-\alpha}\in\Y_{1}$ for all
$\alpha\in\{1,\,2,\,\ldots,\,p\}$, where $\tau:=rp$. Evoking
Lemma~\ref{lem:two} once again we have
\begin{eqnarray*}
hf^{-\alpha}\ex_{\tau}
&=&\gamma h\ex_{\tau-\alpha}\\
&\in&\gamma\Y_{1}\\
&=&\Y_{0}\,.
\end{eqnarray*}
The interpretation of this for the array~\eqref{eqn:dbsync} is
\begin{eqnarray}\label{eqn:coffee}
hf^{-\alpha}x_{i}(\tau)=hf^{-\alpha}x_{j}(\tau)\qquad\forall\,i,\,j\in\{1,\,2,\,\ldots,\,q\}
\end{eqnarray}
for all $\alpha\in\{1,\,2,\,\ldots,\,p\}$. Note that
Assumption~\ref{assume:singleton} and the first property listed in
Lemma~\ref{lem:list} imply $x=h^{-1}hx \cap [x]_{p-2}^{+}$ for all
$x\in\X$. Then we can write
\begin{eqnarray*}
x
&=& h^{-1}hx \cap [x]_{p-2}^{+}\\
&=& h^{-1}hx \cap f[f^{-1}x]_{p-2}\\
&=& h^{-1}hx \cap f(h^{-1}hf^{-1}x \cap[f^{-1}x]_{p-3}^{+})\\
&=& h^{-1}hx \cap fh^{-1}hf^{-1}x \cap f[f^{-1}x]_{p-3}^{+} \\
&\vdots&\\
&=& h^{-1}hx \cap fh^{-1}hf^{-1}x \cap \cdots \cap
f^{p-1}h^{-1}hf^{1-p}x \cap f^{p-1}[f^{1-p}x]_{-1}^{+}\\
&=& h^{-1}hx \cap fh^{-1}hf^{-1}x \cap \cdots \cap
f^{p-1}h^{-1}hf^{1-p}x
\end{eqnarray*}
where for the last step we used the property $[x]_{-1}^{+}=\X$. By
\eqref{eqn:coffee} we can then write
\begin{eqnarray*}
f^{-1}x_{i}(\tau)
&=& \bigcap_{\alpha\in\{1,\,2,\,\ldots,\,p\}}
f^{\alpha-1}h^{-1}hf^{-\alpha}x_{i}(\tau)\\
&=& \bigcap_{\alpha\in\{1,\,2,\,\ldots,\,p\}}
f^{\alpha-1}h^{-1}hf^{-\alpha}x_{j}(\tau)\\
&=& f^{-1}x_{j}(\tau)\,.
\end{eqnarray*}
Recall that $f$ is bijective. Hence $x_{i}(\tau)=x_{j}(\tau)$.
This equality emerges from arbitrary initial conditions. The
time-invariance of the array~\eqref{eqn:dbsync} therefore implies
that $x_{i}(k)=x_{j}(k)$ for all $k\geq\tau$ and all $i,\,j$.
\hfill\null\hfill$\blacksquare$

\begin{remark}
Note that the map $\ex\mapsto f([\ex]_{p-2}^{+}\cap h^{-1}\gamma
h\ex)$ is itself a deadbeat interconnection (over $\X^{q}$) under
the assumptions of Theorem~\ref{thm:main}.
\end{remark}

\begin{remark}
Observe that Theorem~\ref{thm:dbobs} directly follows from
Theorem~\ref{thm:main} by letting the interconnection
$\gamma:\Y^{2}\to\Y^{2}$ be such that
$\gamma_{1}\vay=\gamma_{2}\vay=y_{1}$.
\end{remark}

\section{An example}~\label{sec:dbex}

As an illustration of Theorem~\ref{thm:main}, we now provide an
example where an array of nonlinear observers achieves deadbeat
synchronization. The below pair $(f,\,h)$ is borrowed from
\cite{tuna12}.
\begin{eqnarray}\label{eqn:hmgsys}
f(x)=\left[\begin{array}{c}b\\c^{1/3}\\a^{3}+b^{3}\end{array}
\right]\,,\quad h(x)=a
\end{eqnarray}
with $(a,\,b,\,c)=x\in\X=\Real^{3}$ and $\Y=\Real$. The map $f$ is
bijective, hence Assumption~\ref{assume:bijective} holds, and we
have
\begin{eqnarray*}
[\xhat]_{1}^{+}\cap h^{-1}(y)=\left[\begin{array}{c}y\\{\hat
b}\\{\hat c}-{\hat a}^{3}+y^{3}\end{array} \right]
\end{eqnarray*}
which is singleton. (We refer the reader to \cite{tuna12} for
derivations.) Therefore Assumption~\ref{assume:singleton} is
satisfied with deadbeat horizon $p=3$. Regarding the deadbeat
interconnection $\gamma$, our choice is the one given in
\eqref{eqn:hmgcoup} with $[\gamma_{ij}]\in\Real^{q\times q}$ a
deadbeat coupling matrix. Hence Assumption~\ref{assume:interconn}
also holds. The question now is whether the triple
$(f,\,h,\,\gamma)$ is compatible or not.

\begin{claim}
The triple $(f,\,h,\,\gamma)$ described by \eqref{eqn:hmgsys} and
\eqref{eqn:hmgcoup} is compatible.
\end{claim}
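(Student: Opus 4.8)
The plan is to verify Definition~\ref{def:compatible} directly, exploiting the homogeneous (cubic) structure of both $f$ and $\gamma$. The key observation is that the deadbeat interconnection $\gamma$ in \eqref{eqn:hmgcoup} acts on cubed outputs: if we write $\phi:\Y^{q}\to\Y^{q}$ for the cube-by-cube map $(y_{1},\ldots,y_{q})\mapsto(y_{1}^{3},\ldots,y_{q}^{3})$, then $\phi\gamma=([\gamma_{ij}]\otimes 1)\phi$, i.e., $\phi$ conjugates $\gamma$ into the \emph{linear} map $G:=[\gamma_{ij}]$ acting on $\Real^{q}$. Consequently $\Y_{0}=\phi^{-1}(\N(G-\one\ell^{T}))$ and, more to the point, for each $\sigma$ we get $\Y_{\sigma}=\phi^{-1}\big(\N((G^{\sigma}-\one\ell^{T}))\big)$ where $\ell$ is the normalized left eigenvector of $G$ for eigenvalue $1$ (here we use that $G^{\sigma}-\one\ell^{T}$ has zero row sums only in the relevant quotient, but since cubing is a bijection on $\Real$ the set-theoretic pullback is exactly what we need). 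So membership $h(f^{k}(\ex))\in\Y_{\sigma}$ translates, after cubing each component, into a \emph{linear} condition on a vector built from the third powers of the first coordinates of the $f^{k}(x_{i})$.

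First I would compute $hf^{k}(x)$ explicitly for $x=(a,b,c)$. From \eqref{eqn:hmgsys}, $hf^{0}(x)=a$, $hf(x)=b$, $hf^{2}(x)=c^{1/3}$, and then $hf^{3}(x)=(hf^{2}(x))^{\text{(first-coord rule)}}$; carrying the iteration one step using $f^{3}(x)=f(f^{2}(x))$ shows $hf^{3}(x)=a^{3}+b^{3}$ raised appropriately — the crucial point is that $(hf^{3}(x))^{3}$ is a \emph{fixed integer-coefficient polynomial} in $(hf^{0}x)^{3},(hf^{1}x)^{3},(hf^{2}x)^{3}$. Concretely one checks $hf^{3}(x)=a^{3}+b^{3}$ wait — tracking the recursion: $(hf^{k+3}x)^{3}=(hf^{k}x)^{3}+(hf^{k+1}x)^{3}$ for all $k\geq 0$, because the third coordinate update is $c\mapsto a^{3}+b^{3}$ and $h$ reads off coordinates in the cyclic pattern $a\to b\to c^{1/3}\to\cdots$. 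This linear recursion (with $p=3$) over the cubed outputs is exactly the linear-observer reconstruction property in disguise, and it is the analogue of the identity $\sum_{k=0}^{p-1}M_{k}CA^{k}=CA^{p}$ used in the proof of Theorem~\ref{thm:linear}.

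Then the argument closes just as in Theorem~\ref{thm:linear}. Suppose $h(f^{k}(\ex))\in\Y_{\sigma}$ for $k=0,1,2$, i.e., letting $\eta_{k}:=\big((hf^{k}x_{1})^{3},\ldots,(hf^{k}x_{q})^{3}\big)\in\Real^{q}$, we have $(G^{\sigma}-\one\ell^{T})\eta_{k}=0$ for $k=0,1,2$. By the recursion, $\eta_{3}=\eta_{0}+\eta_{1}$ (componentwise), hence $(G^{\sigma}-\one\ell^{T})\eta_{3}=(G^{\sigma}-\one\ell^{T})\eta_{0}+(G^{\sigma}-\one\ell^{T})\eta_{1}=0$, which says exactly $h(f^{3}(\ex))\in\Y_{\sigma}$. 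Since $p=3$, this is precisely the implication demanded by Definition~\ref{def:compatible}, and as this holds for every $\sigma\geq 1$ and every $\ex\in\X^{q}$, the triple is compatible.

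**Main obstacle.** The only real work is the bookkeeping in establishing the clean recursion $(hf^{k+3}x)^{3}=(hf^{k}x)^{3}+(hf^{k+1}x)^{3}$ — i.e., confirming that the cyclic way $h$ samples the three coordinates of $f^{k}(x)$ lines up so that the cube of the $k{+}3$-rd output equals the sum of the cubes of the $k$-th and $(k{+}1)$-st; this needs one honest pass through $f^{0},\ldots,f^{3}$ and then the obvious shift-invariance. A secondary point to state carefully is that $\Y_{\sigma}=\phi^{-1}\N(G^{\sigma}-\one\ell^{T})$: this uses that $\gamma^{\sigma}$ conjugates to $G^{\sigma}$ under $\phi$ and that $\gamma^{\sigma}(\vay)\in\Y_{0}$ iff $G^{\sigma}\phi(\vay)\in\N(\text{span of differences})$, which because $G^{r}=\one\ell^{T}$ is equivalent to $(G^{\sigma}-\one\ell^{T})\phi(\vay)=0$. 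Everything else is linear algebra identical to the proof already given for Theorem~\ref{thm:linear}.
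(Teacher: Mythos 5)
Your proposal is correct and follows essentially the same route as the paper's proof: you characterize $\Y_{\sigma}$ as the preimage under componentwise cubing of the linear subspace $\N(G^{\sigma}-\one\ell^{T})$, observe that $\bigl(h(f^{3}(x))\bigr)^{3}=\bigl(h(x)\bigr)^{3}+\bigl(h(f(x))\bigr)^{3}$, and conclude by linearity of the null space — exactly the paper's argument, merely phrased as a conjugation/recursion. The only cosmetic difference is that you state the shift-invariant recursion for all $k$, whereas only the $k=0$ instance is needed (and, like the paper, you end up using membership of $h(\ex)$ and $h(f(\ex))$ in $\Y_{\sigma}$ but not of $h(f^{2}(\ex))$).
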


\begin{proof}
Let $G:=[\gamma_{ij}]$. Since $G$ is a deadbeat coupling matrix,
for some integer $r\leq q-1$, we have $G^{r}=G^{r+1}=\one\ell^{T}$
where $\ell\in\Real^{q}$ is the left eigenvector of $G$ for the
eigenvalue $\lambda=1$ satisfying $\ell^{T}\one=1$. Let
$\vay=(y_{1},\,y_{2},\,\ldots,\,y_{q})\in\Real^{q}$ and
$\vay^{u}:=(y_{1}^{u},\,y_{2}^{u},\,\ldots,\,y_{q}^{u})$ for
$u\in\Rational$. Then we have
$\Y_{\sigma}=\{\vay\in\Y^{q}:\vay^{3}\in\N(G^{\sigma}-\one\ell^{T})\}$.
Note that
\begin{eqnarray}\label{eqn:highhopes}
\vay,\,\vi\in\Y_{\sigma}&\implies&
\vay^{3},\,\vi^{3}\in\N(G^{\sigma}-\one\ell^{T})\nonumber\\
&\implies& \vay^{3}+\vi^{3}\in\N(G^{\sigma}-\one\ell^{T})\nonumber\\
&\implies& (\vay^{3}+\vi^{3})^{1/3}\in\Y_{\sigma}
\end{eqnarray}
since $\N(G^{\sigma}-\one\ell^{T})$ is a linear subspace.

Given $\ex=(x_{1},\,x_{2},\,\ldots,\,x_{q})$ with
$x_{i}=(a_{i},\,b_{i},\,c_{i})$ note that
\begin{eqnarray*}
f(x_{i})=\left[\begin{array}{c}b_{i}\\ c_{i}^{1/3} \\
a_{i}^{3}+b_{i}^{3} \end{array}
\right],\,\, f^{2}(x_{i})=\left[\begin{array}{c}c_{i}^{1/3}\\
(a_{i}^{3}+b_{i}^{3})^{1/3} \\ *
\end{array} \right],\,\, f^{3}(x_{i})=\left[\begin{array}{c}(a_{i}^{3}+b_{i}^{3})^{1/3}\\ * \\ * \end{array}
\right]
\end{eqnarray*}
Suppose now $h(\ex),\,h(f(\ex)),\,h(f^{2}(\ex))\in\Y_{\sigma}$ for
some $\sigma$.
\begin{eqnarray*}
\left[\begin{array}{c}a_{1}\\ a_{2} \\\vdots
\\ a_{q}
\end{array}
\right]\,,\quad \left[\begin{array}{c}b_{1}\\ b_{2} \\\vdots
\\ b_{q}
\end{array}
\right]\,,\quad \left[\begin{array}{c}c_{1}^{1/3}\\ c_{2}^{1/3}
\\ \vdots
\\ c_{q}^{1/3}
\end{array}
\right]\in\Y_{\sigma}\,.
\end{eqnarray*}
Then by \eqref{eqn:highhopes} we can write
\begin{eqnarray*}
h(f^{3}(\ex))=\left[\begin{array}{c}(a_{1}^{3}+b_{1}^{3})^{1/3}\\
(a_{2}^{3}+b_{2}^{3})^{1/3} \\\vdots
\\ (a_{q}^{3}+b_{q}^{3})^{1/3}
\end{array}
\right] = \left(\left[\begin{array}{c}a_{1}\\ a_{2} \\\vdots
\\ a_{q}
\end{array}
\right]^{3}+\left[\begin{array}{c}b_{1}\\ b_{2} \\\vdots
\\ b_{q}
\end{array}
\right]^{3}\right)^{1/3} \in\Y_{\sigma}\,.
\end{eqnarray*}
Hence the result.
\end{proof}
\medskip\smallskip

Now that all the required conditions are met, the following array
of $q$ coupled deadbeat observers should achieve deadbeat
synchronization
\begin{eqnarray*}
x_{i}^{+}&=&f\left([x_{i}]_{1}^{+}\cap
h^{-1}\left(\left(\gamma_{i1}a_{1}^{3}+\gamma_{i2}a_{2}^{3}+\ldots+\gamma_{iq}a_{q}^{3}\right)^{1/3}\right)\right)\\
&=&\left[\begin{array}{c} b_{i} \\
\left(c_{i}-a_{i}^{3}+\gamma_{i1}a_{1}^{3}+\gamma_{i2}a_{2}^{3}+\ldots+\gamma_{iq}a_{q}^{3}\right)^{1/3}\\
b_{i}^{3}+\gamma_{i1}a_{1}^{3}+\gamma_{i2}a_{2}^{3}+\ldots+\gamma_{iq}a_{q}^{3}
\end{array} \right]
\end{eqnarray*}
for all deadbeat coupling matrices $[\gamma_{ij}]\in\Real^{q\times
q}$. For instance, letting $[\gamma_{ij}]$ be as in
\eqref{eqn:dbcoup}, the following array is obtained.
\begin{eqnarray*}
x_{1}^{+}
&=&\left[\begin{array}{c} b_{1} \\
\left(c_{1}-a_{1}^{3}+0.4a_{1}^{3}-0.2a_{2}^{3}+3.2a_{3}^{3}-2.4a_{4}^{3}\right)^{1/3}\\
b_{1}^{3}+0.4a_{1}^{3}-0.2a_{2}^{3}+3.2a_{3}^{3}-2.4a_{4}^{3}
\end{array} \right]\\
x_{2}^{+}
&=&\left[\begin{array}{c} b_{2} \\
\left(c_{2}-a_{2}^{3}+0.4a_{1}^{3}-0.2a_{2}^{3}+0.2a_{3}^{3}+0.6a_{4}^{3}\right)^{1/3}\\
b_{2}^{3}+0.4a_{1}^{3}-0.2a_{2}^{3}+0.2a_{3}^{3}+0.6a_{4}^{3}
\end{array} \right]\\
x_{3}^{+}
&=&\left[\begin{array}{c} b_{3} \\
\left(c_{3}-a_{3}^{3}+0.2a_{1}^{3}-0.6a_{2}^{3}+0.6a_{3}^{3}+0.8a_{4}^{3}\right)^{1/3}\\
b_{3}^{3}+0.2a_{1}^{3}-0.6a_{2}^{3}+0.6a_{3}^{3}+0.8a_{4}^{3}
\end{array} \right]\\
x_{4}^{+}&=&\left[\begin{array}{c} b_{4} \\
\left(c_{4}-a_{4}^{3}+0.3a_{1}^{3}-0.4a_{2}^{3}+0.9a_{3}^{3}+0.2a_{4}^{3}\right)^{1/3}\\
b_{4}^{3}+0.3a_{1}^{3}-0.4a_{2}^{3}+0.9a_{3}^{3}+0.2a_{4}^{3}
\end{array} \right]
\end{eqnarray*}
The above array achieves synchronization in $pr=9$ steps, where
$r=3$ is the smallest integer satisfying
$[\gamma_{ij}]^{r+1}=[\gamma_{ij}]^{r}$. Fig.~\ref{fig:dbsync}
shows the simulation results for the initial conditions
$x_{1}(0)=(0.5,\,0.5,\,0.5)$, $x_{2}(0)=(0,\,-1,\,0)$,
$x_{3}(0)=(-0.5,\,0,\,-0.5)$, and $x_{4}(0)=(-1,\,0.5,\,0)$.
\begin{figure}[h]
\begin{center}
\includegraphics[scale=0.6]{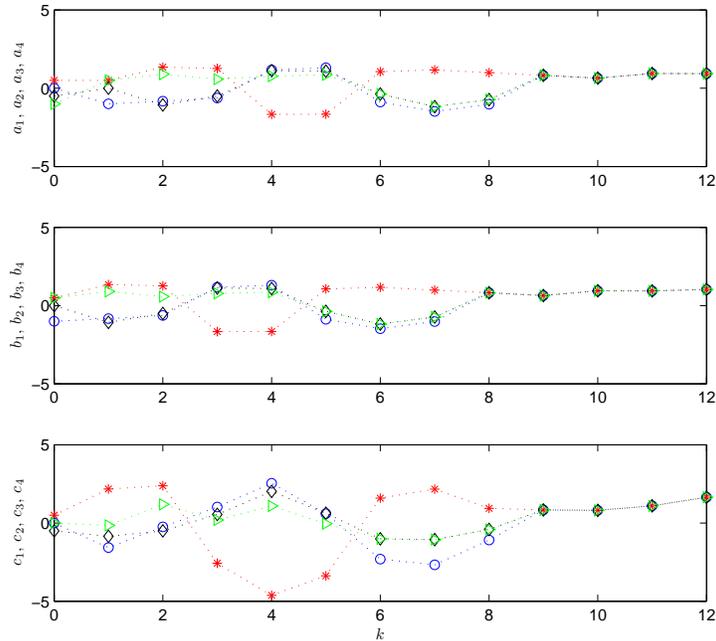}\caption{Evolution of states $x_{1}$ (marked $*$), $x_{2}$ (marked
$\circ$), $x_{3}$ (marked $\diamond$), and $x_{4}$ (marked
$\triangleright$).}\label{fig:dbsync}
\end{center}
\end{figure}

\section{Notes}\label{sec:notes}

The deadbeat interconnection considered in this paper is fixed. A
possible relaxation is suggested by the proof of
Theorem~\ref{thm:main}. Namely, deadbeat synchronization would
still be achieved with time-varying interconnection
$\gamma(k,\,\vay)$ provided that the sets $\Y_{\sigma}$ stayed
fixed and the relation
$\gamma(k,\,\Y_{\sigma})\subset\Y_{\sigma-1}$ was satisfied at all
times $k$. This is very closely related to what we mentioned in
Remark~\ref{rem:timeinvariance} regarding the linear
array~\eqref{eqn:moretempting}. Further generalization in this
direction seems nevertheless not to be an easy task.

A practical design problem is how to construct a deadbeat
interconnection compatible with a given $(f,\,h)$ pair. A
primitive solution to this problem is to select an interconnection
that admits a connected graph that is a (directed) tree. In that
case, in an array of $q$ systems, $q-1$ of the systems would each
be driven by exactly one other system, i.e., for each
$i\in\{1,\,2,\,\ldots,\,q-1\}$ we would have
$\gamma_{i}(\vay)=y_{j}$ for some $j\neq i$, and one system (the
root) would be driven by no one, i.e., $\gamma_{q}(\vay)=y_{q}$.
However, when one starts considering interconnection schemes that
include cycles in their graphs, the problem seems to lack an
obvious systematic solution.

\bibliographystyle{IEEEtran}
\bibliography{references}
\end{document}